\newtheoremstyle{example}
  {}
  {}
  {}
  {\parindent}
  {\bfseries}
  {.}
  {\newline}
  {}
\newtheorem{theorem}{Theorem}[section]
\newtheorem{lemma}[theorem]{Lemma}
\newtheorem{proposition}[theorem]{Proposition}
\newtheorem{corollary}{Corollary}[theorem]
\theoremstyle{example}
\newtheorem{example}{Example}[section]
\theoremstyle{definition}
\newtheorem{definition}[theorem]{Definition}
\newtheorem{remark}[corollary]{Remark}
\newcommand*{\classicsets}[1]{\mathbb{#1}} 
\newcommand*{\nat}{\classicsets{N}}        
\newcommand*{\reals}{\classicsets{R}}
\newcommand*{\proj}{\classicsets{P}} 
\newcommand*{\from}{\mkern-3mu{\colon}\mkern-3mu\linebreak[0]}       
\renewcommand*{\to}{\myapplication{\varrightarrow}}
\DeclareMathOperator{\image}{Im}
\newcommand*{\im}[1]{\image(#1)}
\newcounter{parnum}[section]
\renewcommand{\theparnum}{\thesection.\arabic{parnum}}
\renewcommand{\paragraph}{\medskip\refstepcounter{parnum}\textbf{\theparnum}~\textbf}
\DeclareMathOperator{\rk}{rk}
\DeclareMathOperator{\HF}{HF}
\DeclareMathOperator{\GL}{GL}
\DeclareMathOperator{\PGL}{PGL}
\newcommand{\C}{\mathbb{C}}
\newcommand*{\card}[1]{|#1|}
\newcommand*{\field}{\Bbbk}           
\newcommand{\mysetminusD}{\hbox{\tikz{\draw[line width=0.05em,line cap=round] (0.25em,0) -- (0,0.5em);}}}
\newcommand{\mysetminusT}{\mysetminusD}
\newcommand{\mysetminusS}{\hbox{\tikz{\draw[line width=0.035em,line cap=round] (0.175em,0) -- (0,0.3em);}}}
\newcommand{\mysetminusSS}{\hbox{\tikz{\draw[line width=0.033em,line cap=round] (0.125em,0) -- (0,0.25em);}}}
\newcommand{\mysetminus}{\mathbin{\mathchoice{\mysetminusD}{\mysetminusT}{\mysetminusS}{\mysetminusSS}}}
\newcommand*{\ppp}{(\proj S_1)^3\!\mysetminus\!\Delta}
\newcommand*{\ttt}{\Theta^3\mysetminus\Delta}
\DeclareMathOperator{\sym}{Sym}
\newcommand{\naturals}{\mathbb{N}}
\newcommand*{\cnm}[2]{\genfrac{[}{]}{0pt}{}{{\scriptstyle #1}}{{\scriptstyle #2}}}
\def\to{\longrightarrow}
\DeclareMathOperator{\rank}{rank}
\def\RDerChar{\mathbf{R}}
\def\RDer{\@ifnextchar[{\R@Der}{\ensuremath{\RDerChar}}}
\def\R@Der[#1]{\ensuremath{\RDerChar^{#1}}}
\begin{document}

\setlength{\abovedisplayskip}{5pt} \setlength{\belowdisplayskip}{5pt}
\setlength{\abovedisplayshortskip}{5pt}\setlength{\belowdisplayshortskip}{5pt}

\begin{abstract}
  Waring problem for forms is important and classical in mathematics.
  It has been widely investigated because of its wide applications in several areas.
  In this paper, we consider the Waring problem for binary forms with complex coefficients.
  Firstly, we give an explicit formula for the Waring rank of any binary binomial and several examples to illustrating it.
  Secondly, we prove that, up to scalar multiplication, there are exactly $\binom{d-1}{2}$ binary forms of degree $d$ with Waring rank two and multiple of three fixed distinct linear forms.
\end{abstract}

\title[Binomials and dihedral cover]{On the Waring rank of binary forms: The binomial formula and a dihedral cover of rank two forms} 

\author[Brustenga]{Laura Brustenga i Moncusí}
\address{Universitat Autònoma de Barcelona, 08193 Bellaterra, Barcelona, Spain}
\email{brust@mat.uab.cat}

\author[Masuti]{Shreedevi K. Masuti}
\address{Chennai Mathematical Institute, H1-SIPCOT IT Park, Siruseri, Kelambakkam - 603 103, India}
\email{shreedevikm@cmi.ac.in}

\date{\today}
\thanks{SKM is supported by INSPIRE faculty award funded by Department of Science and Technology, Govt. of India. She is also partially supported by a grant from Infosys Foundation. She was supported by INdAM COFOUND Fellowships cofounded by Marie Curie actions, Italy, for her research in Genova during which part of the work is done and which also provided the travel support for participating in this school.\\
LBM is supported by DOGC11/05/2018/19006/3}

\keywords{Waring problem, Sylvester's algorithm, perp ideal, Hilbert function, secant varieties, apolarity theory}
\subjclass[2010]{Primary: 13F20, Secondary: 11P05, 14N05}


\maketitle


\section{Introduction}
\label{sec:introduction}
This paper concerns symmetric tensor decomposition as a sum of rank one tensors which is also known as the Waring problem for forms.
This topic has a rich history and recently received a huge interest mainly because of its wide applicability in areas as diverse as algebraic statistics, biology, quantum information theory, signal processing, data mining, machine 
learning, see \cite{comon-2008-symmet-tensor,kolda-2009-tensor-decom-applic,Lan12}. 
\medskip

Let $\field[x,y]$ be the standard graded polynomial ring with coefficients in the field \(\field\subseteq \C\).
For \(d\ge 0\), we denote by \(\field[x,y]_d\) the \(\field\)-vector space of forms of degree \(d\) in \(\field[x,y]\).
For every form \(F\in \field[x,y]_d\), by the classical Jordan Lemma (see \cite[Appendix III]{young-alg-of-invariants}),
there exist linear forms $L_1,\dots,L_r\in \field[x,y]_1$ and scalars \(a_1,\dots,a_r\in \field\) with \(r\le d+1\) such that
\[
  F=a_1L_1^d+\dots+a_rL_r^d.
\] 
When \(\field=\C\) (resp. \(\field=\reals\)), the least of such possible numbers $r$ is called the \emph{Waring rank of $F$} (resp. the \emph{real Waring rank of \(F\)}) and we denote it by \(\rk(F)\) (resp. \(\rk_{\reals}(F)\)).
Except for a few results comparing the Waring and the real Waring rank (see \cref{exm:real-vs-complex-1st,exm:real-vs-complex-2nd,prop:set-of-maximal-gap-real-complex-rank}), we will consider \(\field=\C\) throughout this paper.
\medskip

The longstanding problem of finding the Waring rank of a generic form in any number of variables has been solved recently by J.~Alexander and A.~Hirschowitz \cite{AH95}, after being remained open for more than one hundred years.
However, solving the Waring problem for generic form of degree $d$ does not give any information about specific form of degree $d$. 

There has been an intense research in finding the Waring rank of binary forms which goes back to the work of J. J. Sylvester. 
Sylvester \cite{sylvester-1851-essay-forms-sketch-eli-trans-can-forms,sylvester-1851-remarkable-dicovery-forms-and-hyperdeterminants} gave an explicit algorithm for computing the Waring rank of a binary form.
We refer to \cite{reznick-2012-length-binary-forms} for an excellent survey on the Waring problem for binary forms.
The Waring problem is even more interesting and challenging when the coefficient field is different from the field of complex numbers, see \cite{reznick-2012-length-binary-forms}.
Especially, because of the direct connection with the real world, there is a lot of interest in finding the decomposition of forms with real coefficients, see for instance \cite{BCG11}.
\medskip

As an immediate consequence of Sylverster's algorithm one can give an explicit formula for the Waring rank of monomials in $\C[x,y]$.
Moreover, an explicit formula for the Waring rank of monomials in any finite number of variables has been given recently in \cite[Proposition 3.1]{CCG12}.
This motives us to look beyond monomials for binary forms, so first we consider the binomial case and second forms with three distinct roots fixed.
\medskip

The first main result of this paper gives an explicit formula for the Waring rank of binomials in \(\C[x,y]\) (see \cref{thm:binomial}).
However, we remark that this is far from being an obvious generalization of the monomial case.
More generally, it is difficult to describe the Waring rank of $F_1+\cdots+F_k$ in terms of the Waring ranks of $F_1,\ldots,F_k$ as is evidenced by Strassen's conjecture (see \cite{carlini-catalisano-2018-e-computability,CCG12,CCO17,Tei15}). 

Our main tool in computing the Waring rank of a binomial form \(F\in\C[x,y]\) is Sylvester's algorithm (see Section \ref{SubSec:Syl}).
In order to apply this algorithm, we need to find a form of least degree in the apolar ideal \(F^{\perp}\) (see Section \ref{SubSec:Syl}).
For this, we give a nonzero form $g_1$ in \(F^{\perp}\) and, computing the Hilbert function (see Section \ref{SubSec:HF}) of \(T/F^{\perp}\) in certain degrees, we are able to conclude that $g_1$ is of least degree in \(F^{\perp}\).
Hence we avoid to compute the entire apolar ideal \(F^{\perp}\). 
\medskip

The second main goal is to study some geometric properties of the loci of binary forms with Waring rank two.
For a fixed degree \(d\ge 4\), we show that, up to scalar multiplication, there are \(\binom{d-1}{2}\) forms with Waring rank two and multiple of three distinct linear forms \(l_1,l_2,l_3\), or equivalently with three distinct roots fixed (see \cref{coro:finite-cardinal-of-Pi2,prop:cardinal-of-Pi2}).
For this, we build a dihedral cover of the set of forms with Waring rank two that are multiple of \(l_1l_2l_3\) (see \cref{def:The-Gamma-map}) and we show that there are \(\binom{d-1}{2}\) different orbits in the cover space by the dihedral action (see \cref{thr:Gamma-dihedral-cover}).
Such dihedral covers depend on the linear forms \(l_1,l_2,l_3\) and, ranging over all the three distinct linear forms, they cover the set of all binary forms with Waring rank two.

This cover also allows us to describe the set of all forms in \(\proj \reals[x,y]_d\) with a maximal gap between their Waring and real Waring rank (see \cref{prop:set-of-maximal-gap-real-complex-rank}).

Finally we offer a second proof of \cref{prop:cardinal-of-Pi2} based on more classical results.
Namely, the set of forms multiple of \(l_1l_2l_3\) form a linear subspace of \(\proj S_d\) which has codimension three (see \cref{rmk:linearity-of-pi}).
We show that it intersects transversely the second secant variety \(\overline{\Sigma}\) of the rational normal curve in \(\proj S_d\), which has dimension three (see \cref{rmk:2dn-sec-2-rank-forms}).
Then, the claim follows by Bézout's Theorem.
To this intent, we use Teracini's Lemma to parametrise the tangent space to a smooth point of \(\overline{\Sigma}\).
\medskip

The paper is organized as follows: We fix some notation and gather some preliminary results needed for our purpose in \cref{sec:preliminaries}. In Section \ref{Section:BinomialForm} we give an explicit formula for the Waring rank of a binomial form. \cref{ssec:two-variables} is devoted to study the dihedral cover and the geometric properties of the loci of binary forms with Waring rank two.

\section*{Acknowledgements}
This research is a Pragmatic\footnote{\url{http://www.dmi.unict.it/~pragmatic/docs/Pragmatic-main.html}} project.
We thank the speakers of Pragmatic 2017 for delivering the insightful lectures.
Specially to Enrico Carlini, who introduced us this fascinating subject and provided many useful ideas throughout the preparation of this manuscript.
We also thank the organisers of Pragmatic 2017 for providing the local support.
For the support on travel expenses, the first author thanks to MTM2016-75980-P, ``Métodos efectivos en Geometría Aritmética 2'' and the second author thanks to INdAM.

\section{Preliminaries}
\label{sec:preliminaries}

Let \(S\), \(T\) denote respectively the standard graded polynomial rings \(\C[x,y]\), \(\C[X,Y]\).
For \(i\ge 0\), denote respectively by \(S_i\), \(T_i\) the \(i\)-th graded component of \(S\), \(T\).

\paragraph{Apolarity theory and Sylvester's algorithm.}
\label{SubSec:Syl}
Consider the apolar action of \(T\) on \(S\), that is consider \(S\) as a \(T\)-module by means of the differentiation action
$$
\begin{array}{ cccc}
  \circ: & T \times S &\longrightarrow & S \\
         & (g , F) & \to & g \circ F = g(\partial_{X}, \partial_{Y})(F).
\end{array}
$$

\begin{definition}\label{def:apolar-ideal}
  Let \(F\) be a form in \(S_d\).
  A form \(g\in T_{d'}\) is called \emph{apolar to \(F\)} when \(g\circ F\in S_{d-d'}\) is the zero form.
  The \emph{apolar ideal to \(F\)}, denoted as $F^\perp$, is the homogeneous ideal in $T$ generated by all the forms apolar to \(F\), or equivalently
  $$
  F^\perp=\{g \in T\ |\ g \circ F = 0 \}\subseteq T.
  $$
\end{definition}

The so called Sylvester's algorithm below, an algorithm to compute Waring rank of binary forms, is a consequence of Sylvester's Theorem developed in \cite{sylvester-1851-essay-forms-sketch-eli-trans-can-forms,sylvester-1851-remarkable-dicovery-forms-and-hyperdeterminants}. 
Modern proofs of Sylvester's Theorem may be found in \cite[Theorem 2.1]{reznick-2012-length-binary-forms} (which is an elementary proof), \cite[Section 5]{kung-rota-1984-invariant-theory-binary-forms}, and with further discussion in \cite{kung-1986-gundelfinger-bin-forms,kung-1987-canonical-bin-forms-even-deg,kung-1990-canonical-bin-forms-theme-sylvester,reznick-1996-homogeneous-poly-to-pde}.
Here we just state the final version of the algorithm, see \cite[Remark 4.16]{carlini-grieve-oeding-4-lectures}, \cite[Algorithm 2]{bernardi-gimigliano-ida-2011-computing-symmetric-rank} or \cite[Section 3]{comas-seiguer-rank-binary}.

\begin{theorem}[Sylvester's algorithm]\label{thr:sylvesters-algorithm}
  Let \(F\) be a form in \(S_d\).
  By the Structure Theorem (see \cite[Theorem 1.44(iv)]{iarrobino-determinantal-loci}), the apolar ideal \(F^{\perp}\) to \(F\) can be generated by two forms \(g_1,g_2\in T\) with \(\deg(g_1)+\deg(g_2)=d+2\).
  Suppose that such forms \(g_1,g_2\in T\) are given with $\deg(g_1) \leq \deg(g_2)$.
  Then,
  \[
    \rk(F)=\begin{cases}
      \deg(g_1) & \mbox{ if } g_1 \mbox{ is square-free}\\
      \deg(g_2) & \mbox{ otherwise }.
    \end{cases}
  \]
\end{theorem}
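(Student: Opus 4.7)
The plan is to invoke the \emph{apolarity lemma} for binary forms, which identifies \(\rk(F)\) with the minimum degree of a square-free element of \(F^\perp\), and then to read off that minimum directly from the generators \(g_1,g_2\).

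The apolarity lemma rests on the identity \(g(\partial_x,\partial_y)(L^d)=c\cdot g(a,b)\,L^{d-\deg g}\) (with \(c\neq 0\)) for \(L=ax+by\in S_1\) and \(g\in T\): a Waring decomposition \(F=\sum_i c_i L_i^d\) with distinct \([L_i]\in\mathbb{P}(S_1)\) produces the square-free form \(\prod_i(b_i X-a_i Y)\in F^\perp\), while conversely a square-free \(h\in F^\perp\) of degree \(r\) yields \(r\) linear forms \(L_1,\ldots,L_r\in S_1\) dual to its roots whose \(d\)-th powers span a subspace of \(S_d\) containing \(F\); this last step uses the perfect apolar pairing \(T_d\times S_d\to\C\) together with the Gorenstein duality identity \((F^\perp_d)^\perp=\C\cdot F\).

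With this reduction in hand, two structural facts finish the argument. Since \(T/F^\perp\) is Artinian, \((g_1,g_2)\) is a regular sequence in \(T\) and hence \(g_1,g_2\) are coprime in the UFD \(T\); moreover, in every degree \(k\) with \(\deg g_1\le k<\deg g_2\) the second generator has not yet appeared, so \(F^\perp_k=g_1\cdot T_{k-\deg g_1}\). If \(g_1\) is square-free, then \(g_1\) itself is the minimum-degree square-free element of \(F^\perp\), so \(\rk(F)=\deg g_1\). If instead \(\ell^2\mid g_1\) for some \(\ell\in T_1\), then every nonzero element of \(F^\perp\) of degree strictly less than \(\deg g_2\) is divisible by \(\ell^2\) and thus not square-free, which gives the bound \(\rk(F)\ge\deg g_2\).

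For the matching upper bound in the non-square-free case, I would consider the linear system \(\mathcal{L}\subseteq\mathbb{P}(F^\perp_{\deg g_2})\) spanned by \(g_2\) together with the image of \(T_{\deg g_2-\deg g_1}\cdot g_1\); note that \(\mathcal{L}\) has positive dimension. Its base locus on \(\mathbb{P}^1\) is contained in \(V(g_1)\cap V(g_2)\), which is empty by the coprimality of \(g_1,g_2\), so Bertini's theorem guarantees that the generic member of \(\mathcal{L}\) is smooth, that is, a square-free binary form of degree \(\deg g_2\) in \(F^\perp\). This produces \(\rk(F)\le\deg g_2\), completing the equality. The main obstacle is the careful formulation and verification of the apolarity lemma, in particular the reconstruction of the scalars \(c_i\) from a square-free apolar form; this is standard inverse-systems material and would be handled either by a short direct argument with the perfect pairing above or by appealing to a reference such as \cite{iarrobino-determinantal-loci} or \cite{carlini-grieve-oeding-4-lectures}.
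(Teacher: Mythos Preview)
Your argument is correct. Note, however, that the paper does not actually prove \cref{thr:sylvesters-algorithm}: it is stated as a known result with pointers to \cite[Remark 4.16]{carlini-grieve-oeding-4-lectures}, \cite[Algorithm 2]{bernardi-gimigliano-ida-2011-computing-symmetric-rank} and \cite[Section 3]{comas-seiguer-rank-binary}, so there is no ``paper's own proof'' to compare against. What you have written is essentially the standard proof that those references contain: reduce via the apolarity lemma to the characterisation \(\rk(F)=\min\{\deg h : h\in F^\perp \text{ square-free}\}\); use that \(F^\perp_k=g_1\cdot T_{k-\deg g_1}\) for \(k<\deg g_2\) to settle the lower bounds in both cases; and in the non-square-free case exhibit a square-free element of degree \(\deg g_2\) in \(F^\perp\).

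Two small remarks. First, your appeal to Gorenstein duality \((F^\perp_d)^\perp=\C\cdot F\) for the reconstruction step is heavier than needed: once \(h=\prod_i(b_iX-a_iY)\in F^\perp\) is square-free of degree \(r\le d\), the kernel of \(h\circ(-):S_d\to S_{d-r}\) has dimension exactly \(r\) and visibly contains the \(r\) independent powers \(L_i^d\), so \(F\in\mathrm{span}\{L_i^d\}\) directly. Second, the Bertini step is valid but can be replaced by a one-line pencil argument if you prefer to stay elementary: for any \(a\in T_{\deg g_2-\deg g_1}\) the discriminant of \(g_2+t\,a g_1\) is a polynomial in \(t\), and choosing \(a\) so that \(a g_1\) and \(g_2\) share no root forces this discriminant to be nonzero at \(t=\infty\) in the sense that the leading and constant coefficients cannot both degenerate; either way, coprimality of \(g_1,g_2\) is exactly what makes the linear system base-point-free, and that is the only substantive input.
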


\paragraph{Binary forms.} For the convenience of the reader, this section collects some well-known results on the Waring and the real Waring rank of binary forms that we will use.
\medskip

\cref{thr:maximal-possible-rank} below is proved in \cite[Theorems 4.9 and 4.10]{reznick-2012-length-binary-forms}.
For \(\field=\C\), it is an immediate consequence of Sylvester's Theorem.
More generally, \cref{thr:maximal-possible-rank} is a consequence of \cite[Corollary 2.2]{reznick-2012-length-binary-forms}, an adaptation of Sylvester's Theorem to forms with coefficients in a subfield \(\field\) of \(\C\). 

\begin{theorem}\label{thr:maximal-possible-rank}
  Let \(F\) be a form in \(\field[x,y]_d\) with \(d\ge 0\) and \(\field\) a subfiel of \(\C\). 
  Then, \(\rk_{\field}(F)\le d\).
\end{theorem}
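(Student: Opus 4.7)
My plan is to split into cases based on the ground field and use Sylvester's algorithm (Theorem \ref{thr:sylvesters-algorithm}) for $\field = \C$, then invoke Reznick's adaptation for general subfields $\field \subseteq \C$.

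For $\field = \C$, I would argue as follows. The cases $d=0$ and $d=1$ are trivial: a nonzero linear form has rank $1 = d$, and we may as well assume $F \neq 0$ throughout. So assume $d \geq 2$ and $F \neq 0$. Since $F$ is nonzero, no nonzero constant lies in $F^{\perp}$, hence the smaller generator $g_1$ of the apolar ideal has $\deg g_1 \geq 1$. By the Structure Theorem quoted in Theorem \ref{thr:sylvesters-algorithm}, we have $\deg g_1 + \deg g_2 = d + 2$ with $\deg g_1 \leq \deg g_2$, so in particular $\deg g_1 \leq \lfloor (d+2)/2 \rfloor \leq d$. Now split into the two cases dictated by the algorithm. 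If $g_1$ is square-free, then $\rk(F) = \deg g_1 \leq d$ and we are done. If $g_1$ is not square-free, then $\deg g_1 \geq 2$ (since every linear form is trivially square-free), which forces $\deg g_2 = d + 2 - \deg g_1 \leq d$, and again $\rk(F) = \deg g_2 \leq d$.

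For a general subfield $\field \subseteq \C$, the same case analysis goes through once one has an appropriate Sylvester-type statement over $\field$. I would cite Reznick's adaptation \cite[Corollary 2.2]{reznick-2012-length-binary-forms}: it asserts that, for $F \in \field[x,y]_d$ nonzero, the rank $\rk_{\field}(F)$ equals the smallest $r$ such that $F^{\perp}$ contains a degree-$r$ form that is a product of $r$ distinct linear factors over $\field$. Applying the same dichotomy on the minimal generator of $F^{\perp}$ (or on a suitably replaced element when factorization fails over $\field$) yields the same bound $\rk_{\field}(F) \leq d$.

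The genuine obstacle is confined to the subfield case: a minimal-degree generator of $F^{\perp}$ over $\field$ need not split into linear factors over $\field$, so one cannot read off $\rk_{\field}(F)$ directly from its degree. Reznick's argument resolves this by working inside the $d$-dimensional space $F^{\perp}_d$ of degree-$d$ apolar forms (whose dimension follows from a Hilbert function computation, as recalled in Section \ref{SubSec:HF}) and producing, within this space, an element that is square-free and splits over $\field$. Since such an element has degree exactly $d$, the conclusion $\rk_{\field}(F) \leq d$ follows.
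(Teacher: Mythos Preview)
Your approach matches the paper's: the paper does not give its own proof but simply observes that the $\field=\C$ case is immediate from Sylvester's Theorem and that the general case is a consequence of \cite[Corollary 2.2]{reznick-2012-length-binary-forms}. Your $\C$ argument via \cref{thr:sylvesters-algorithm} is correct and spells out exactly what the paper means by ``immediate,'' and for general $\field$ you correctly identify the obstacle (the minimal generator need not split over $\field$) and defer to Reznick's degree-$d$ argument, which is precisely the paper's citation.
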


For \(d\ge 3\), \cref{thr:maximal-rank-forms} below gives all the forms \(F\in S_d\) with maximal Waring rank, that is \(\rk(F)=d\).
Though it is well-known that \(\rk(x^{d-1}y)=d\), to the best of our knowledge (cf. \cite[Theorem 2.4]{tokcan-2017-warin-rank}), the converse has been proven only later \cite[Exercise 11.35]{Harris-AG-first-course} or \cite[Corollary 3]{bialynicki-birula-2010-extrem-binar-forms}.

\begin{theorem}\label{thr:maximal-rank-forms}
  Let \(F\in S_d\) with \(d\ge 3\).
  Then \(\rk(F)=d\) if and only if there are two distinct linear forms \(l_1,l_2\in S_1\) so that \(F=l_1^dl_2\).
\end{theorem}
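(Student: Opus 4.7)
The plan is to invoke Sylvester's algorithm (\cref{thr:sylvesters-algorithm}) and exploit the degree bookkeeping on the two generators of the apolar ideal. Let $F^\perp = (g_1, g_2)$ with $\deg g_1 + \deg g_2 = d+2$ and $\deg g_1 \le \deg g_2$, so $\deg g_1 \le \lfloor (d+2)/2 \rfloor$.

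For the easy direction, suppose $F = l_1^{d-1} l_2$ with $l_1, l_2$ distinct (note that degree considerations force the exponent $d-1$ in the statement). Apply an element of $\GL(S_1)$ sending $l_1 \mapsto y$ and $l_2 \mapsto x$; since the apolar pairing is equivariant under such a change of coordinates, I may assume $F = xy^{d-1}$. A direct computation shows that $Y^2 \circ F = 0$ and $X^d \circ F = 0$, and a dimension count on $T/(Y^2, X^d)$ confirms $F^\perp = (Y^2, X^d)$. The minimal generator $g_1 = Y^2$ is not square-free, so Sylvester's algorithm gives $\rk(F) = \deg g_2 = d$.

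For the converse, suppose $\rk(F) = d$. For $d \ge 3$, we have $\deg g_1 \le \lfloor (d+2)/2 \rfloor < d$, so if $g_1$ were square-free, Sylvester's algorithm would give $\rk(F) = \deg g_1 < d$, a contradiction. Hence $g_1$ is not square-free and $\rk(F) = \deg g_2 = d$, which forces $\deg g_1 = 2$. Being a non-square-free degree-two binary form, $g_1 = cL^2$ for some $L \in T_1$ and $c \in \C^*$. Choose a basis $X', Y'$ of $T_1$ with $X' = L$ and let $x', y'$ be the dual basis of $S_1$; by equivariance, the condition $L^2 \circ F = 0$ becomes $\partial^2 F/\partial (x')^2 = 0$. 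Expanding $F = \sum_{i=0}^{d} c_i (x')^{d-i}(y')^i$, this annihilation forces $c_i = 0$ for $0 \le i \le d-2$, so $F = (y')^{d-1}\bigl(c_{d-1}x' + c_d y'\bigr)$. If $c_{d-1} = 0$ then $F = c_d(y')^d$ has rank one, contradicting $\rk(F) = d \ge 3$. Hence $c_{d-1} \ne 0$ and setting $l_1 := y'$, $l_2 := c_{d-1}x' + c_d y'$ produces two distinct (i.e.\ linearly independent) linear forms with $F = l_1^{d-1} l_2$.

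The main obstacle, if any, is organisational rather than technical: one must be careful that the change-of-coordinates argument in the converse is legitimate, i.e.\ that sending $L$ to a coordinate form on the $T$-side corresponds to a well-defined dual change on $S$ under which the apolar pairing is equivariant. Once this is in place, the degree accounting in Sylvester's algorithm immediately pins down $\deg g_1 = 2$ and the differential equation $\partial^2 F/\partial(x')^2 = 0$ yields the claimed factorisation of $F$ with essentially no further computation.
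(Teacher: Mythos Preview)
The paper does not supply its own proof of this theorem; it is stated in the preliminaries and attributed to \cite[Exercise 11.35]{Harris-AG-first-course} and \cite[Corollary 3]{bialynicki-birula-2010-extrem-binar-forms}, so there is nothing to compare against directly. Your argument via Sylvester's algorithm is, however, exactly the natural route in this paper's framework and is essentially correct, including your observation that the exponent in the statement must be \(d-1\) rather than \(d\).

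There is one small slip in the forward direction. After sending \(l_1\mapsto y\), \(l_2\mapsto x\) you have \(F=xy^{d-1}\), but then you write \(Y^2\circ F=0\) and \(X^d\circ F=0\); with the paper's convention \(X\leftrightarrow\partial_x\), \(Y\leftrightarrow\partial_y\), these should be \(X^2\circ F=0\) and \(Y^d\circ F=0\), so that \(F^\perp=(X^2,Y^d)\). This is purely a labelling error and does not affect the argument: the low-degree generator is the square \(X^2\), it is not square-free, and Sylvester's algorithm yields \(\rk(F)=d\).

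The converse is clean and correct: the inequality \(\deg g_1\le\lfloor(d+2)/2\rfloor<d\) forces \(g_1\) to be non-square-free, whence \(\deg g_2=d\), \(\deg g_1=2\), and \(g_1=cL^2\). Your equivariance remark is legitimate: a \(\GL_2\)-change on \(S_1\) induces the contragredient change on \(T_1\), and the apolar pairing is preserved, so after choosing coordinates with \(X'=L\) the equation \((X')^2\circ F=0\) kills all coefficients except those of \((y')^d\) and \(x'(y')^{d-1}\). The exclusion of the pure-power case and the conclusion \(F=l_1^{d-1}l_2\) with \(l_1,l_2\) independent follow exactly as you wrote.
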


\begin{definition}\label{def:hyperbolic-forms}
  A form \(F\in\reals[x,y]_d\) is called \emph{hyperbolic} if all its roots are real, that is it splits into linear factors over \(\reals\).
\end{definition}

Sylvester in \cite{sylvester-65-Newtons-hitherto-rule} already observed, by the dehomogenized case, that a hyperbolic binary form which is not a power of a linear form has real Waring rank \(d\), which is maximal.
See \cite[Section 3]{reznick-2012-length-binary-forms} for more history of this problem.
A partial converse of this statement for square free hyperbolic forms was established in \cite{causa-re-2011-maximum-real-rank,comon-ottaviani-2012-typical-real-rank}
and the full converse is proved in \cite[Theorem 2.2]{blekherman-2016-real-rank}.

\begin{theorem}\label{thr:hyperbolic-max-real-rank}
  Let \(F\) be a form in \(\reals[x,y]_d\) with \(d\ge 3\).
  Suppose that \(F\) is not a \(d\)-th power of a linear form.
  Then, the real Waring rank of \(F\) is \(d\) if and only if \(F\) is hyperbolic.
\end{theorem}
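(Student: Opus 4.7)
The plan is to prove both implications using Reznick's real adaptation of Sylvester's algorithm (\cite[Corollary~2.2]{reznick-2012-length-binary-forms}): for $F\in\reals[x,y]_d$, the real Waring rank $\rk_\reals(F)$ equals the minimum degree of a real form in $F^\perp\cap\reals[X,Y]$ that is square-free with all roots real. Under this characterisation each direction becomes a statement about the real factorisation of low-degree elements of the apolar pencil of $F$.

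For ``hyperbolic (not a $d$-th power) $\Rightarrow \rk_\reals(F)=d$'', I would argue by contradiction. Assuming $\rk_\reals(F)=r\le d-1$, real Sylvester supplies a decomposition $F=\sum_{j=1}^{r}a_j L_j^d$ with $a_j\in\reals\setminus\{0\}$ and pairwise non-proportional real linear $L_j$. After a generic real change of coordinates, dehomogenising at $y=1$ yields $f(x)=\sum_{j=1}^{r}c_j(x-\gamma_j)^d$ with $\gamma_j\in\reals$ distinct and $c_j\neq 0$, while the hypothesis that $F$ is hyperbolic forces $f$ to possess $d$ real zeros counted with multiplicity. The decisive step is the classical impossibility that no such short real power sum can represent a hyperbolic polynomial when $r<d$; I would establish this via a catalecticant signature argument in the spirit of \cite{blekherman-2016-real-rank}: the Hankel matrices associated to $f$ have a specific real signature forced by hyperbolicity (a Hermite-type criterion), whereas a representation as $r<d$ real $d$-th powers of distinct linear forms constrains those matrices to a rank-$r$ factorisation through a definite signature incompatible with the hyperbolic one. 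An alternative route is the iterated Rolle plus Descartes sign-change estimate developed in \cite{causa-re-2011-maximum-real-rank,comon-ottaviani-2012-typical-real-rank}.

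For the converse I would argue the contrapositive: if $F$ has a non-real root, exhibit a totally real square-free element of $F^\perp$ in degree at most $d-1$. As an illustration, $F=x(x^2+y^2)$ of degree $3$ has $F^\perp$ generated in degree $2$ by $X^2-3Y^2$, which is totally real, while the hyperbolic counterpart $F'=x(x^2-y^2)$ has the apolar generator $X^2+3Y^2$ with complex roots — a duality between the signs of the quadratic factors of $F$ and of the minimal generator of $F^\perp$. In general, writing $F=Q\cdot F_1$ with $Q\in\reals[x,y]_2$ definite (forced by the existence of a non-real root of $F$) and $F_1\in\reals[x,y]_{d-2}$, the generators $(g_1,g_2)$ of $F^\perp$ with $\deg g_1+\deg g_2=d+2$ and $2\le\deg g_1\le\deg g_2\le d$ (the bounds using that $F$ is not a $d$-th power) inherit real signature properties that produce a totally real, square-free element of the pencil $F^\perp_{d-1}$. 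Making this construction explicit essentially reproduces the same signature argument in reverse, again via the catalecticant / moment-matrix perspective.

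The main obstacle is the signature impossibility in the first direction — that a hyperbolic binary form of degree $d$, other than a pure power, cannot be written as a sum of fewer than $d$ real $d$-th powers. Each of the cited references handles this crux via a distinct technique (iterated Rolle/Descartes, typical-rank analysis on the real Veronese, and real secant positivity respectively), and executing any one of them rigorously would constitute the bulk of a full proof.
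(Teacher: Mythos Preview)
The paper does not prove this theorem at all: it is stated as a preliminary result and attributed entirely to the literature. The forward implication (hyperbolic and not a pure power $\Rightarrow$ $\rk_{\reals}(F)=d$) is credited to Sylvester \cite{sylvester-65-Newtons-hitherto-rule} (see also \cite[Section~3]{reznick-2012-length-binary-forms}), and the converse is credited to \cite{causa-re-2011-maximum-real-rank,comon-ottaviani-2012-typical-real-rank} in the square-free case and to \cite[Theorem~2.2]{blekherman-2016-real-rank} in general. There is therefore no ``paper's own proof'' to compare against.

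As for your proposal itself: it is not a proof but an outline of where a proof would live, and you say as much in your final paragraph. The references you invoke for each direction are precisely the ones the paper cites, so your sketch is consistent with the known arguments. The one caveat is that your description of the converse (``if $F$ has a non-real root, exhibit a totally real square-free element of $F^\perp$ in degree at most $d-1$'') glosses over the genuinely delicate part: Blekherman and Sinn's argument is not a direct construction in the apolar ideal but a convexity/positivity argument about real rank with respect to varieties, and your ``signature properties inherited by $(g_1,g_2)$'' is too vague to stand in for it. If you want an actual proof here, you would need to reproduce one of those arguments in full rather than gesture at the mechanism.
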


\paragraph{Hilbert function.} 
\label{SubSec:HF}
Let $I$ be a homogeneous ideal in $T$.
The \emph{Hilbert function of $T/I$} is an important numerical invariant associated to $T/I$ defined as
\[
  \HF_{T/I}(i)=\dim_\C T_i - \dim_\C I_i,
\]
where $I_i$ denotes the $\C$-vector space of degree \(i\) forms in $I$.
In Section \ref{Section:BinomialForm}, our strategy is to compute the Hilbert function of $T/F^{\perp}$ and extract information about the Waring rank of $F$ from it using Sylvester's algorithm.

For $F\in S_d$, let $\langle F\rangle$ denote the $T$-submodule of $S$ generated by $F$, that is, the $\C$-vector space generated by $F$ and by the corresponding derivatives of all orders.
It is well-known that $\langle F \rangle$ determines the Hilbert function of $T/F^\perp$ as follows (see \cite{Ger96}):
\begin{equation}
  \label{Eqn:HFViaDual}\tag{\dag}
  \HF_{T/F^\perp}(i)= \dim_\C \langle F \rangle_i
\end{equation}
where $\langle F \rangle_i$ denotes the $\C$-vector space generated by the forms of degree $i$ in $\langle F \rangle$.

\paragraph{Group actions.} We review some basic definitions and facts about a group acting on a set.
Actually, we consier groups acting on a topological space but, since we will apply this definitions to finite topological spaces with the discrete topology, we may consider them as simply sets.

\begin{definition}\label{def:finer-eq-partition}
  Let \(X\) be a set.
  Let \(\Omega\) and \(\Omega'\) be two partitions of \(X\).
  The partition \(\Omega\) is \emph{finer than} \(\Omega'\) if for every \(\omega\in \Omega\) there is \(\omega'\in \Omega'\) such that \(\omega\subseteq \omega'\).
\end{definition}

Recall that to two partitions \(\Omega\) and \(\Omega'\) of a set \(X\) are equal if and only if they are mutually finer.

Given a map of sets \(f\from X\to Y\), we denote by \(X/f\) the partition of \(X\) determined by the fibres of \(f\).

\begin{definition}\label{def:group-theoretical-kernel}
  Let \(X\) be a set and \(G\) a group acting on it.
  Given \(x\in X\) the \emph{orbit of \(x\) by the action of \(G\)} is the set \(\{g\cdot x\}_{g\in G}\) and it is denoted by \(G\cdot x\).
  The set of all orbits is denoted by \(X/G\), which is a partition of \(X\).
\end{definition}

Recall that a group \(G\) \emph{acts faithfully on} a set \(X\) if there are no nontrivial \(g\in G\) such that \(g\cdot x=x\) for all \(x\in X\).

\begin{definition}\label{def:G-cover}
  Let \(X\), \(Y\) be finite sets and \(G\) a group.
  A map \(f\from X\to Y\) is a \emph{\(G\)-cover of \(Y\)} if the group \(G\) acts faithfully on \(X\) and the partitions \(X/f\) and \(X/G\) are equal.
\end{definition}

\section{Waring rank of binomial forms}
\label{Section:BinomialForm}
The main goal of this section is to give an explicit formula for the Waring rank of any binomial form (\cref{thm:binomial}).
In particular, we prove that the Waring rank of a binomial form does not depend on its coefficients.
We give several examples, \cref{Examples:thm} illustrates the result itself and
\cref{Example:Trinomial,exm:real-vs-complex-2nd} that the Waring rank of a trinomial and the real Waring rank of a binomial may depend on its coefficients respectively.

Recall that, given a nonzero homogeneous ideal $I$ of $T$, the \emph{initial degree} of \(I\) is the least integer \(i>0\) for which \(I_i\) is not zero.

\begin{theorem}\label{thm:binomial}
  Let $F\in S_d$ be a binomial form.
  That is, \(F=ax^ry^{s+\alpha}+ bx^{r+\alpha}y^s\) for some \(a,b\in\C\setminus \{0\}\) and \(r,s,\alpha\in \naturals\).
  Assume (without loss of generality) that $0 \leq r \leq s$ and $\alpha \geq 1$.
  Let \(q,j\) be the unique nonnegative integers such that $r =q\alpha+ j$ with \(0\le j < \alpha\) and set \(\delta=r+\alpha-s\).
  Then the {Waring rank of $F$ can be computed using the following table.} 
  \begin{center}
    \begin{tabular}{|l l|c|}
      \hline
      \multicolumn{2}{|c|}{Conditions} & \(\rk(F)\) \\
      \hline
      (1)~~ \(\delta \leq 0\) & & \(s+1\)\\
      \hline
      \multirow{4}{8em}{(2)~~ \(\delta > 0\)}
                                       & \(j=0\), \(r=s\) and \(\alpha> 1\) & \(s+2\) \\
                                       & \(j=\delta\) & \(s+1\) \\
                                       & \(j>\delta\) & \(r+\alpha+1\) \\
                                       & otherwise & \(r+\alpha-j\) \\
      \hline
    \end{tabular}
  \end{center}
\end{theorem}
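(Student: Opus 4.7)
The plan is to apply Sylvester's algorithm (\cref{thr:sylvesters-algorithm}): identify two generators $g_1, g_2 \in T$ of $F^\perp$ with $\deg g_1 \le \deg g_2$ and $\deg g_1 + \deg g_2 = d+2$, and check whether $g_1$ is square-free. Locating the initial degree of $F^\perp$ will be done via $\HF_{T/F^\perp}(i) = \dim_\C \langle F\rangle_i$ (see \eqref{Eqn:HFViaDual}), by enumerating the monomials $x^a y^{i-a}$ that arise as derivatives of each of the two terms of $F$ and tracking the linear relations among them.

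The building blocks are two families of apolar elements. First, $X^{r+\alpha+1}, Y^{s+\alpha+1} \in F^\perp$ always, since these operator orders exceed the respective exponents in both monomials of $F$. Second, for every pair $(u, v)$ with $\max(0, r-\alpha+1) \le u \le r$ and $\max(0, s-\alpha+1) \le v \le s$, the element
\[
G_{u,v} = X^u Y^v \bigl(b\,(r+\alpha)!\,s!\,Y^\alpha - a\,r!\,(s+\alpha)!\,X^\alpha\bigr)
\]
lies in $F^\perp$; its degree-$\alpha$ factor splits into $\alpha$ distinct linear forms in $X,Y$, none of which equals $X$ or $Y$, so $G_{u,v}$ is square-free precisely when $u, v \le 1$. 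In sub-cases where these constraints force every $G_{u,v}$ above the target degree, ``hidden'' apolar elements of the form $X^a Y^b \sum_{k} c_k X^{(n-k)\alpha} Y^{k\alpha}$ will be constructed by imposing $g \circ F = 0$ as a linear system on $(c_k)$. The crucial structural fact is that the operators $X^{a+(n-k)\alpha}Y^{b+k\alpha}$ and $X^{a+(n-k-1)\alpha}Y^{b+(k+1)\alpha}$, applied respectively to the first and second monomials of $F$, produce the same output monomial, so the linear system telescopes into a sequence of two-term recursions relating consecutive coefficients.

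The proof will then split into cases on $\delta = r+\alpha-s$ and on the residue $j$. In Case~(1) ($\delta \le 0$), the derivatives of $x^{r+\alpha}y^s$ alone span all $i+1$ monomials of degree $i \le r+\alpha$, so $\dim_\C\langle F\rangle_i = i+1$ there; the initial degree of $F^\perp$ is exactly $r+\alpha+1$, realized by the non-square-free $g_1 = X^{r+\alpha+1}$, giving $\rk(F) = \deg g_2 = s+1$. In Case~(2) ($\delta > 0$) the candidate $g_1$ depends on the sub-case: either a $G_{u,v}$ at its minimal admissible degree (square-free for small $(u,v)$, non-square-free otherwise) or a hidden element. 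In each sub-case a Hilbert function count will confirm that the candidate is of minimal degree, and the square-freeness test will select the corresponding formula from the table.

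The principal obstacle lies in the sub-cases where hidden elements are required, most notably the ``otherwise'' sub-case, in which $g_1$ is expected to have degree $(q+1)\alpha = r+\alpha-j$. Two verifications will be delicate: first, that $g_1$ is square-free, which will follow from the substitution $Z = X^\alpha$, $W = Y^\alpha$ reducing $g_1$ to a polynomial of degree $q+1$ in $Z, W$ whose coefficients form a geometric progression with constant nonzero ratio (so it has $q+1$ distinct roots, each lifting to $\alpha$ distinct linear forms in $X, Y$); and second, that $\dim_\C \langle F\rangle_i = i+1$ for every $i < (q+1)\alpha$, which requires a careful monomial-by-monomial enumeration of the derivatives and a rank calculation on the resulting matrix. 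The hypothesis $0 < j < \delta$ is precisely what guarantees that the two terms of $F$ together reach every relevant monomial and that the telescoping recursion producing $g_1$ is non-degenerate.
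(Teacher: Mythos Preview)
Your overall strategy---Sylvester's algorithm, Hilbert function via \eqref{Eqn:HFViaDual}, and telescoping apolar elements supported on exponents congruent modulo~$\alpha$---is exactly the one the paper uses. The gap is in your treatment of the ``otherwise'' sub-case. You assert that there the minimal-degree generator $g_1$ of $F^\perp$ has degree $(q+1)\alpha=r+\alpha-j$, is square-free, and that $\dim_\C\langle F\rangle_i=i+1$ for every $i<(q+1)\alpha$. This is only correct for roughly half of that sub-case, namely when $2j\ge\delta-1$.

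When $0\le j<\lceil(\delta-1)/2\rceil$ (so $s+j+2\le r+\alpha-j$), the initial degree of $F^\perp$ is not $r+\alpha-j$ but $s+j+2$: there is a nonzero element
\[
g_1=\sum_{i=0}^{q}(-1)^i X^{\,r+1-i\alpha}\,Y^{\,i\alpha+s-r+j+1}\in (F^\perp)_{s+j+2},
\]
and it is \emph{not} square-free (it is divisible by $Y^2$ whenever $j\ge1$ or $s>r$). Hence your planned verification that $(F^\perp)_i=0$ for $i<r+\alpha-j$ will fail at $i=s+j+2$. The value $\rk(F)=r+\alpha-j$ still comes out, but via the \emph{non-square-free} branch of Sylvester's algorithm: since $g_1$ is not square-free, $\rk(F)=\deg g_2=d+2-(s+j+2)=r+\alpha-j$. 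In particular one never needs to exhibit a square-free element of degree $r+\alpha-j$ here, nor to prove the Hilbert function is maximal up to that degree. (This range also contains the exceptional row $j=0$, $r=s$, $\alpha>1$: then $g_1$ is divisible only by $Y$, it \emph{is} square-free, and $\rk(F)=s+2$.)

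So the ``otherwise'' case must itself be split according to the sign of $(r+\alpha-j)-(s+j+2)=\delta-2j-2$. For $j\ge\lceil(\delta-1)/2\rceil$ your description is essentially the paper's (the square-free $g_1$ of degree $(q+1)\alpha$ with alternating coefficients, checked by multiplying by $X^\alpha+Y^\alpha$); for smaller $j$ you need instead to locate the lower-degree non-square-free generator and invoke the other branch of Sylvester.
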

\begin{proof}
  Since $\rk(F)$ is invariant by a linear change of coordinates, we may assume that $a=\frac{(r+\alpha)!s!}{r!(s+\alpha)!}$ and $b=1$. For every pair of integers \(m,n\),
  we set $\cnm{m}{n}$ equal to $\frac{m!}{(m-n)!}$ if $m \geq n \geq 0$ and equal to zero otherwise.

  \begin{asparaenum}[$(1)$]
  \item Suppose \(\delta\le 0\), that is $s \geq r+\alpha$.
    Clearly, $g_1=X^{r+\alpha+1} \in F^\perp$.
    We claim that the initial degree of $F^\perp$ is $r+\alpha+1$.
    For $0 \leq i \leq s, $ we have
    \begin{eqnarray*}
      X^i Y^{s-i} \circ F
      =\textstyle a \cdot \cnm{r}{i} \cdot \cnm{s+\alpha}{s-i} \cdot x^{r-i}y^{\alpha+i}+
      \cnm{r+\alpha}{i} \cdot \cnm{s}{s-i} \cdot x^{r+\alpha-i}y^{i} .
    \end{eqnarray*}
    It is easy to see that the set $\{X^i Y^{s-i} \circ F:0 \leq i \leq r+\alpha\} \subseteq \langle F \rangle_{r+\alpha}$ is $\C$-linearly independent.
    Therefore by \eqref{Eqn:HFViaDual},
    $$
    \HF_{T/F^\perp}(r+\alpha)=\dim_\C \langle F \rangle_{r+\alpha} = r+\alpha+1.
    $$
    This implies that the nonzero homogeneous elements of $F^\perp$ have degree at least $r+\alpha+1$.
    Since $g_1 \in F^\perp$, the initial degree of $F^\perp$ is $r+\alpha+1$.

    Therefore $g_1$ is part of a minimal generating set of $F^\perp$ and hence there exists $g_2 \in T_{s+1}$ such that $F^\perp=(g_1,g_2)$. As $s \geq r+\alpha$ and $g_1$ is not square free, by Sylvester's algorithm
    \[
      \rk(F) = s+1.
    \]

  \item Assume \(\delta>0\), that is $s<r+\alpha$.
    In order find the Waring rank of $F$ in this case, we compute the Hilbert function of $T/F^\perp$ and use this information to compute the element of smallest degree in $F^\perp.$ Since this computation depends on $j,$ we divide the proof in the following four cases:(i) $ 0\leq j < \lceil \frac{\delta-1}{2}\rceil;$ (ii) $\delta$ is odd and $j=\frac{\delta-1}{2};$ (iii) $\lceil \frac{\delta-1}{2} \rceil < j <\delta -1$, OR $\delta$ is even and $j=\frac{\delta}{2};$ and (iv) $\delta -1 \leq j$.
    Notice that the first row of (2) in the table is covered in Case (i), the second and the third rows of (2) of the table are covered in Case (iv) whereas the last row of the table is covered in Cases (i),(ii),(iii) and (iv).
    \medskip

    \noindent{\bf Case i:~} $ 0\leq j < \lceil \frac{\delta-1}{2}\rceil-1$
    First we show that the initial degree of $F^\perp$ is $s+j+2$.
    For this it suffices to show that $\HF_{T/F^\perp}(s+j+1)=s+j+2$ and that there exists a nonzero form of degree $s+j+2$ in $F^\perp$.
    For \(0\le i\le r+\alpha-j-1\), we have
    \begin{align}
      \label{Eqn:Case1}
      & X^{r+\alpha-j-1-i}Y^i \circ F \\
      & \textstyle = a\cdot\cnm{r}{r+\alpha-j-1-i} \cdot \cnm{s+\alpha}{i} \cdot x^{j+1+i-\alpha}y^{s+\alpha-i} + \cnm{r+\alpha}{r+\alpha-j-1-i} \cdot \cnm{s}{i} \cdot
        x^{j+1+i}y^{s-i}. \nonumber
    \end{align}

    By \eqref{Eqn:Case1} for $i=0,1, \ldots, \min\{s,\alpha-j-2\}$, we get
    $$\{x^{j+1}y^s,x^{j+2}y^{s-1},\ldots,x^{j+1+\min\{s,\alpha-j-2\}}y^{s-\min\{s,\alpha-j-2\}}\} \subseteq \langle F\rangle_{s+j+1}.$$
    \noindent
    By assumption on $j$, we have $2(j+1) \leq {\delta}$ and hence $ (q+1) \alpha-(j+1) > s$.
    Therefore taking $i= (q+1) \alpha-(j+1), (q+1) \alpha-j,\ldots, (q+1) \alpha-1$ in \eqref{Eqn:Case1}, we get
    \[
      \{x^{q\alpha}y^{s+j+1-q\alpha},x^{q\alpha+1}y^{s+j-q\alpha},\ldots,x^{q\alpha+j}y^{s-q\alpha+1}\} \subseteq \langle F\rangle_{s+j+1}.
    \]
    Now substituting $i=\alpha-j-1, \alpha-j,\ldots,(q+1) \alpha-(j+2)$ in \eqref{Eqn:Case1}, we conclude that all monomials of degree $s+j+1$ belong to $\langle F\rangle$. Therefore by \eqref{Eqn:HFViaDual},
    $\HF_{T/F^\perp}(s+j+1)=s+j+2$. Hence the nonzero homogeneous elements of $F^\perp $ have degree at least $s+j+2$.

    \noindent We claim that
    \[
      g_1=\sum_{i=0}^{q} (-1)^i X^{r+1- i\alpha}Y^{i \alpha+s-r+j+1} \in (F^\perp)_{s+j+2}.
    \]
    If $q=0$, then $r=j$. Therefore $g_1=X^{r+1}Y^{s+1}$ which clearly belongs to $F^\perp$.
    Hence assume that $q >0$.
    Then
    \begin{align*}
      g_1\circ F
      & =0\,+ \cnm{r+\alpha}{r+1}\cdot \cnm{s}{s-r+j+1} \cdot x^{\alpha-1}y^{r-j-1}+\\
      &\quad +\sum_{i=1}^{q-1}(-1)^i \Biggl( a\cdot \cnm{r}{r+1-i\alpha}\cdot \cnm{s+\alpha}{i\alpha+s-r+j+1}\cdot x^{i\alpha-1}y^{(1-i)\alpha+r-j-1}+ \\
      &\qquad\qquad\qquad\ +\cnm{r+\alpha}{r+1-i\alpha} \cdot
        \cnm{s}{i\alpha+s-r+j+1} \cdot x^{(i+1)\alpha-1}y^{-i\alpha+r-j-1} \Biggr)\\
      &\quad +(-1)^q a \cdot \cnm{r}{r+1-q\alpha} \cdot \cnm{s+\alpha}{q\alpha+s-r+j+1} \cdot x^{q\alpha-1}y^{(1-q)\alpha+r-j-1}+0\\
      &=0.\\
    \end{align*}
    Thus the initial degree of $F^\perp$ is $s+j+2$. Therefore $F^\perp=(g_1,g_2)$ for some $0 \neq g_2 \in T_{r+\alpha-j}$. Moreover,
    \begin{eqnarray*}
      g_1 = \begin{cases}
        Y^2 \left( \sum_{i=0}^{q} (-1)^i X^{r+1- i\alpha}Y^{i \alpha+s-r+j-1} \right) & \mbox{ if } j \geq 1 \mbox{ OR } s -r >0\\
        Y \left( \sum_{i=0}^{q} (-1)^i X^{r+1- i\alpha}Y^{i \alpha} \right) & \mbox{ if } j=0 \mbox{ and } r=s.
      \end{cases}
    \end{eqnarray*}
    Suppose that $j=0$ and $r=s$.
    Then
    $$
    \left( \sum_{i=0}^{q} (-1)^i X^{r+1- i\alpha}Y^{i \alpha} \right) (X^\alpha+Y^\alpha)=X^{r+\alpha+1}+(-1)^{q}X Y^{r+\alpha}.
    $$
    Since $X^{r+\alpha+1}+(-1)^{q} X Y^{r+\alpha}$ is square-free, $g_1$ is square-free if $j=0$ and $r=s$. Clearly, if $j \geq 1$, OR $s-r >0$, then
    $g_1$ is not square-free. Therefore by Sylvester's algorithm
    \begin{eqnarray*}
      \rk(F) = \begin{cases}
        r+\alpha-j & \mbox{ if } j \geq 1 \mbox{ OR } s-r >0\\
        s+2 & \mbox{ if } j=0 \mbox{ and } r=s.\\
      \end{cases}
    \end{eqnarray*}

    \noindent{\bf Case ii:~} $\delta$ is odd and $j=\frac{\delta-1}{2}$

    First we prove that the initial degree of $F^\perp$ is $s+j+1$. For $0 \leq i \leq r+\alpha-j$,
    we have
    \begin{align}
      \label{Eqn:Case2}
      & X^{r+\alpha-j-i}Y^i \circ F \\
      &= a\cdot\textstyle \cnm{r}{r+\alpha-j-i} \cdot \cnm{s+\alpha}{i} \cdot x^{j+i-\alpha}y^{s+\alpha-i} +
        \cnm{r+\alpha}{r+\alpha-j-i} \cdot \cnm{s}{i} \cdot x^{j+i}y^{s-i} .\nonumber
    \end{align}

    \noindent
    Substituting $i=0,1,\ldots,\alpha-j-1$ in \eqref{Eqn:Case2}, we get
    $$\{x^jy^s,x^{j+1}y^{s-1},\ldots,x^{\alpha-1}y^{s-\alpha+j+1}\} \subseteq \langle F\rangle_{s+j}.$$
    As $j+1=\delta-j=r+\alpha-s-j$, we have $j+s+1-\alpha=r-j=q \alpha$. Therefore taking $i=s+1,s+2,\ldots,r+\alpha-j-1$ in \eqref{Eqn:Case2} we get
    \[
      \{x^{q\alpha}y^{s-q\alpha+j},x^{q\alpha+1}y^{s-q\alpha+j-1},\ldots,x^{q\alpha+j-1}y^{s-q\alpha+1}\} \subseteq \langle F \rangle_{s+j}.
    \] Now substituting $i=\alpha-j,\alpha-j+1,\ldots,s$ in \eqref{Eqn:Case2} we conclude that all the monomials of degree $s+j$ are in $\langle F \rangle$. Hence by \eqref{Eqn:HFViaDual}, $\HF_{T/F^\perp}(s+j)=s+j+1$.
    Therefore the nonzero homogeneous elements of $F^\perp$ have degree at least $s+j+1$.

    We claim that
    \[
      g_1=\sum_{i=0}^{q+1} (-1)^i X^{s+j+1-i \alpha} Y^{i \alpha} \in (F^\perp)_{s+j+1}.
    \]
    Indeed,
    \begin{align*}
      g_1\circ F
      & =0\,+\cnm{r+\alpha}{s+j+1} \cdot \cnm{s}{0} \cdot x^{r+\alpha-s-j-1}y^s \\
      &\quad + \sum_{i=1}^q (-1)^i \biggl(a\cdot \cnm{r}{s+j+1-i \alpha} \cdot \cnm{s+\alpha}{i \alpha} \cdot x^{r+i\alpha-s-j-1}y^{s+(1-i)\alpha}+\\
      &\quad\qquad\qquad\qquad+\cnm{r+\alpha}{s+j+1-i \alpha} \cdot \cnm{s}{i \alpha} \cdot x^{r+(i+1)\alpha-s-j-1}y^{s-i\alpha} \biggr)\\
      &\quad + (-1)^{q+1} a \cdot \cnm{r}{0} \cdot \cnm{s+\alpha}{(q+1)\alpha} \cdot x^r y^{s-q \alpha}+0\\
      &= 0.
    \end{align*}
    Therefore there exists $0\neq g_2\in T_{r+\alpha-j+1}$ such that $F^\perp=(g_1,g_2)$.
    As
    \[
      g_1 \left (X^{\alpha} + Y^\alpha \right) = X^{s+j+1+\alpha}+(-1)^{q+1} Y^{s+j+1+\alpha},
    \]
    and $X^{s+j+1+\alpha}+(-1)^{q+1} Y^{s+j+1+\alpha}$ is square-free, $g_1$ is also square-free. Hence by Sylvester's algorithm $\rk(F)=s+j+1=r+\alpha-j$.

    \noindent{\bf Case iii:~}$ \lceil \frac{\delta-1}{2}\rceil < j < \delta - 1 $ OR $\delta$ is even and $j=\frac{\delta}{2}$

    Let $k=\delta-j$. Then $ 2\leq k \leq \lceil \frac{\delta-1}{2}\rceil$. We show that $\rk(F)=r+\alpha-j=s+k$. For this it suffices to show that $\HF_{T/F^\perp}(s+k-1)=s+k$ and that there exists a square-free polynomial of degree $s+k$ in $F^\perp$.
    For $0 \leq i \leq r+\alpha-k+1$, we have
    \begin{align}
      \label{Eqn:Case3}
      & X^{r+\alpha-k+1-i}Y^i \circ F\\
      &\textstyle=a\cdot\cnm{r}{r+\alpha-k+1-i} \cdot \cnm{s+\alpha}{i} \cdot x^{k-1+i-\alpha}y^{s+\alpha-i} +
        \cnm{r+\alpha}{r+\alpha-k+1-i} \cdot \cnm{s}{i} \cdot x^{k-1+i}y^{s-i}.\nonumber
    \end{align}
    Substituting $i=0,1,\ldots,\alpha-k$ in \eqref{Eqn:Case3} we get
    \[
      \{x^{k-1}y^{s},x^{k}y^{s-1},\ldots,x^{\alpha-1}y^{s-\alpha+k}\} \subseteq \langle F\rangle_{s+k-1}.
    \]
    Notice that $r=q\alpha+j=q\alpha+(\delta-k)$.

    Taking $i=s+1,s+2,\ldots,(q+1)\alpha-1$ in \eqref{Eqn:Case3} we get

    \[
      \{x^{q \alpha}y^{s-q\alpha+k-1},x^{q \alpha+1}y^{s-q\alpha+k-2},\ldots,x^{q \alpha+k-2}y^{s-q \alpha+1}\} \subseteq \langle F \rangle_{s+k-1}.
    \]
    (Note that as $k \leq \lceil \frac{\delta-1}{2}\rceil$, $k \leq \delta-k $ and hence $(q+1)\alpha-1 \leq r+\alpha-k+1$.)

    \noindent Now using \eqref{Eqn:Case3} for $\alpha-k < i \leq s$, we conclude that all the monomials of degree $s+k-1$ are in $ \langle F \rangle$, and thus $\langle F \rangle_{s+k-1}=S_{s+k-1}$. Hence by \eqref{Eqn:HFViaDual}
    $\HF_{T/F^\perp}(s+k-1)=s+k$.

    \noindent Next we claim that
    \[
      g_1=\sum_{i=0}^{q+1} (-1)^iX^{s+k-i\alpha}Y^{i\alpha} \in (F^\perp)_{s+k}.
    \]
    We have
    \begin{align*}
      g_1 \circ F & = 0\, + \cnm{r+\alpha}{s+k} \cdot \cnm{s}{0} \cdot x^{r+\alpha-s-k} y^{s} \\
                  &\quad + \sum_{i=1}^q (-1)^i\biggl(a \cdot \cnm{r}{s+k-i\alpha}\cdot \cnm{s+\alpha}{i\alpha} \cdot x^{r-s-k+i \alpha}y^{s+(1-i)\alpha} +\\
                  &\quad\qquad\qquad\qquad+ \cnm{r+\alpha}{s+k-i\alpha} \cdot \cnm{s}{i\alpha} \cdot x^{r-s-k+(i+1)\alpha}y^{s-i\alpha}\biggr)\\
                  &\quad + (-1)^{q+1} a \cdot \cnm{r}{0} \cdot \cnm{s+\alpha}{(q+1)\alpha} \cdot x^r y^{s-q\alpha}+0\\
                  &=0.\\
    \end{align*}
    (Notice that $(q+1) \alpha=q \alpha + \alpha=s-\alpha+k+\alpha=s+k>s$). Hence $g_1 \in (F^\perp)_{s+k}$. Also
    \[
      g_1 (X^\alpha+ Y^\alpha)=X^{s+k+\alpha}+(-1)^{q+1} Y^{s+k+\alpha}.
    \]
    As $X^{s+k+\alpha}+(-1)^{q+1} Y^{s+k+\alpha}$ is square-free, $g_1$ is also square-free.
    Therefore by Sylvester's algorithm $\rk(F)=s+k=s+\delta-j=r+\alpha-j$.

    \noindent {\bf Case iv:~} $\delta-1 \leq j \leq \alpha-1$

    First we show that the initial degree of $F^\perp$ is $s+1$. For this it suffices to show that $\HF_{T/F^\perp}(s)=s+1$ and that there exists a nonzero form of degree $s+1$ in $F^\perp$. For $0 \leq i \leq r+\alpha$, we have
    \begin{align*}
      & X^{r+\alpha-i}Y^i \circ F \\
      &\textstyle =a\cdot\cnm{r}{r+\alpha-i} \cdot \cnm{s+\alpha}{i} \cdot x^{i - \alpha} y^{s+\alpha-i} + \cnm{r+\alpha}{r+\alpha-i} \cdot \cnm{s}{i} \cdot x^{i} y^{s-i}. \nonumber
    \end{align*}

    \noindent Therefore
    \begin{align*}
      & \{x^iy^{s-i}:0 \leq i < \alpha\}\\
      & \cup\textstyle\left\{a\cdot\cnm{r}{r+\alpha-i} \cdot \cnm{s+\alpha}{i} \cdot x^{i - \alpha} y^{s+\alpha-i} + \cnm{r+\alpha}{r+\alpha-i} \cdot \cnm{s}{i} \cdot x^{i} y^{s-i}:\alpha \leq i \leq s\right\} \subseteq \langle F \rangle_s.
    \end{align*}
    This implies that $\langle F \rangle_s=S_{s}$. Hence by \eqref{Eqn:HFViaDual},
    \[
      \HF_{T/F^\perp}(s)=s+1.
    \]
    We claim that
    \[
      g_1=\sum_{i=0}^{q+1}(-1)^i X^{s-(j-\delta)-i\alpha}Y^{i\alpha+(j-\delta)+1} \in (F^\perp)_{s+1}.
    \]
    We have
    \begin{align*}
      g_1\circ F
      & = 0\,+ \cnm{r+\alpha}{s-j+\delta }\cdot \cnm{s}{j-\delta+1} \cdot x^{r+\alpha-s+j-\delta}y^{s-j+\delta-1} \\
      &\quad +\sum_{i=1}^q(-1)^i \biggl(a \cdot \cnm{r}{s-j+\delta-i \alpha}\cdot \cnm{s+\alpha}{i\alpha+j-\delta+1} \cdot x^{r-s+j-\delta+i\alpha}y^{s-j+\delta+(1-i)\alpha-1} \\
      &\qquad\qquad\qquad + \cnm{r+\alpha}{s-j+\delta-i \alpha} \cdot \cnm{s}{i\alpha+j-\delta+1} \cdot x^{r-s+j-\delta+(i+1)\alpha}y^{s-j+\delta-i\alpha-1}\biggr) \\
      & \quad + (-1)^{q+1} a \cdot \cnm{r}{0} \cdot \cnm{s+\alpha}{(q+1)\alpha+(j-\delta+1)} \cdot x^r y^{s-q\alpha-(j-\delta+1)}+0 \\
      &=0.
    \end{align*}
    (Notice that $(q+1) \alpha+j-\delta=r+\alpha-\delta=s$, hence $s-(j-\delta)=(q+1) \alpha=r+(\alpha-j) > r$.)
    Hence $g_1 \in (F^\perp)_{s+1}$.
    Therefore the initial degree of $F^\perp$ is $s+1$. Hence $g_1$ is part of a minimal generating set of $F^\perp$. Therefore there exists $0 \neq g_2 \in T_{r+\alpha+1}$ such that $F^\perp=(g_1,g_2)$. Clearly, if $j \geq \delta+1$, then $g_1$ is not square-free.
    If $\delta-1 \leq j \leq \delta$, then
    \begin{eqnarray*}
      g_1 (X^\alpha+ Y^\alpha) =\begin{cases}
        X^{s+\alpha+1}+(-1)^{q+1} Y^{s+\alpha+1} & \mbox{ if }j= \delta-1\\
        Y (X^{s+\alpha}+(-1)^{q+1} Y^{s+\alpha}) & \mbox{ if }j=\delta,
      \end{cases}
    \end{eqnarray*}
    which implies that $g_1$ is square-free. Hence by Sylvester's algorithm
    \[
      \rk(F)=\begin{cases}
        s+1 & \mbox{ if } \delta-1 \leq j \leq \delta\\
        r+\alpha+1 & \mbox{ if } j \geq \delta+1.
      \end{cases}\qedhere
    \]
  \end{asparaenum}
\end{proof}

\begin{remark}
  \label{Remark:GenSpeRank}
  The generic rank of a form of degree \(r+s+\alpha\) is $\lceil \frac{r+s+\alpha+1}{2} \rceil$, see \cite{AH95} or \cite{carlini-grieve-oeding-4-lectures}.
  \cref{thm:binomial} illustrates that the Waring rank of a specific a form behaves as weirdly as possible compared to the generic rank.
  In particular, the Waring rank of a specific form can be smaller or larger than the generic rank.
\end{remark}

We illustrate \cref{thm:binomial} in the following example.

\begin{example}
  \label{Examples:thm}
  Let the notation be as in Theorem \ref{thm:binomial}.
  \begin{asparaenum}[(1)]
  \item [(0)] ($r=s=0$ and $\alpha \geq 1$) Let $F=x^\alpha+y^{\alpha}$. Clearly,
    $$
    \rk(F)=\begin{cases}
      1 & \mbox{ if } \alpha=1\\
      2 & \mbox{ if } \alpha=2.
    \end{cases}
    $$

    On the other hand, since $r=s=0, $ we have $\delta=\alpha \geq 1$ and $j=0$. Therefore $\rk(F)$ coincides with the Waring rank stated in Theorem \ref{thm:binomial}.
  \item [(1)] ($r=s$ and $\alpha=1$) Let $F=x^ry^{r+1}+x^{r+1}y^r$. In this case $F^\perp=(g_1,g_2)$ where
    \[
      g_1=X^{r+1}-X^rY+\cdots+(-1)^iX^{r+1-i}Y^i+(-1)^{r+1} Y^{r+1} \mbox{ and } g_2=X^{r+2}.
    \]
    Since $g_1$ is square-free, by Sylvester's algorithm $\rk(F)=r+1$. 

    On the other hand, since $r=s$, we have $\delta=r+\alpha-s=\alpha=1$ and hence $j=0$ for every nonnegative integer $r$. Therefore $ \rk(F)=r+1$ by Theorem \ref{thm:binomial} also. 

  \item [(2)] ($r=0,~s>0$ and $\alpha \geq 1$). Let $F=ay^{s+\alpha}+x^{\alpha}y^s$ where $a=\frac{\alpha!s!}{(s+\alpha)!}$. In this case $\delta=r+\alpha-s=\alpha-s$ and $j=0 $ if $\delta >0$. Hence, by Theorem \ref{thm:binomial},
    $$\rk(F)=\begin{cases}
      s+1 & \mbox{ if } \alpha \leq s \\
      \alpha & \mbox{ if } \alpha >s.
    \end{cases}
    $$ 

    This can be verified directly (without using Theorem \ref{thm:binomial}) as follows: We have 
    \[
      F^{\perp}=\begin{cases}
        (X^{\alpha+1},Y^{s+1} - X^\alpha Y^{s-\alpha+1}) & \mbox{ if } \alpha \leq s \\
        (XY^{s+1},X^{\alpha}-Y^{\alpha}) & \mbox{ if } \alpha >s
      \end{cases}
    \]
    and hence by Sylvester's algorithm $\rk(F)$ is as required.
  \end{asparaenum}
\end{example}

The following example illustrates that the Waring rank of a trinomial may depend on its coefficients.

\begin{example}
  \label{Example:Trinomial}
  Consider the quadratic form $F=x^2+xy+y^2$. Then
  \[
    F=[x \hspace*{2mm} y]A_F [x \hspace*{2mm} y]^T \mbox{ where }
    A_F=\left(
      \begin{array}{cc}
        1 & 1/2\\
        1/2 & 1
      \end{array}
    \right).
  \] It is a standard fact from linear algebra that $\rk(F)=\rank(A_F)$.
  Hence \(\rk(F)=2\).
  On the other hand, if $G=x^2+2xy+y^2=(x+y)^2$, then $\rk(G)=1$.
  This shows that unlike the binomial case, even in the binary case the Waring rank of a trinomial may depend on its coefficients.
\end{example}

Now we compare the Waring and the real Waring rank of a binomial form. 

\begin{example}
  \label{exm:real-vs-complex-1st}
  Let $F=x^ry^r(x+y)$ where $r \geq 1$.
  By Theorem \ref{thm:binomial} $\rk(F)=r+1$.
  Whereas, since $F$ splits completely into linear factors in $\mathbb{R}$, $\rk_{\mathbb{R}}(F)=2r+1$ by \cref{thr:hyperbolic-max-real-rank}.
\end{example}

Moreover, unlike in the complex case, the real Waring rank of a binomial form may depend on its coefficients .

\begin{example}
  \label{exm:real-vs-complex-2nd}

  By \cref{thr:hyperbolic-max-real-rank}, $\rk_{\mathbb{R}}( x^3-xy^2)=3$ whereas $\rk_{\mathbb{R}}(x^3+xy^2)=2$. But $\rk(x^3 \pm xy^2)=\rk(y^3 \pm x^2y)=2$ by Theorem \ref{thm:binomial} (take $r=0,~s=1$ and $\alpha=2$ in Theorem \ref{thm:binomial}). 
\end{example}

\section{On binary forms with Waring rank two}
\label{ssec:two-variables}
This section studies some geometric properties of the loci of binary forms with Waring rank two.
Fix an integer \(d\ge 4\).
We show that a form with Waring rank two is square free (see \cref{coro:sq-freeness-of-rk-2,rmk:any-number-of-variables}).
Moreover, for every three distinct linear forms \(l_1,l_2,l_3\in\proj S_1\) there are just a finite number of forms in \(\proj S_d\) with Waring rank two and multiple of \(l_1l_2l_3\) (see \cref{coro:finite-cardinal-of-Pi2}).
This number does not depend on the linear forms \(l_1,l_2,l_3\) and it is \(\binom{d-1}{2}\)
(see \cref{prop:cardinal-of-Pi2}).
\medskip

We build a dihedral cover of the set of forms with Waring rank two and multiples of \(l_1l_2l_3\) (see \cref{def:The-Gamma-map,thr:Gamma-dihedral-cover}, see also \cref{def:G-cover}) and show that there are \(\binom{d-1}{2}\) different orbits in the cover space by the dihedral action (see \cref{prop:cardinal-of-Pi2}).
This cover allows us also to describe the set of all forms in \(\proj \reals[x,y]_d\) with a maximal gap between their Waring and real Waring rank (see \cref{prop:set-of-maximal-gap-real-complex-rank}).
Finally we offer a second proof of \cref{prop:cardinal-of-Pi2} based on more classical results.
Namely, the set of forms multiple of \(l_1l_2l_3\) form a linear subspace of \(\proj S_d\) (see \cref{rmk:linearity-of-pi}).
We show that it intersects transversely the second secant variety of the rational normal curve in \(\proj S_d\) and that their dimensions are complementary, then result follows by Bezout's theorem.
\medskip

We denote by \(\nat_d\) the set of integers \(l\) with \(0\le l\le d-1\).
Let \(X\) be a set.
We denote by \(\card{X}\) the cardinal of \(X\) and by \(\Delta(X)\) (or simply by \(\Delta\) when there is no risk of confusion) the big diagonal in \(X^3\), that is \(X^3\!\mysetminus\!\Delta\) is the set of triples of pairwise distinct elements of \(X\).

We consider the \(\C\)-vector space \(S_1\) with basis \(\{x,y\}\).
This fixes an action of the general linear group \(\GL_2(\C)\), or simply \(\GL_2\), on \(S_1\).
Namely, every \(\alpha\in \GL_2\) determines a unique invertible \(\C\)-linear map \(\alpha\from S_1\to S_1\) and then \(\alpha\cdot l=\alpha(l)\) for all \(l\in S_1\).
Since \(S_d=\sym^d S_1\), this action extends to an action of \(\GL_2\) on \(S_d\), that is given \(\alpha\in\GL_2\) and \(f\in S_d\),
\[
  \alpha\cdot f=\sym^d(\alpha)(f).
\] Observe that \(\sym^d(\alpha)\) is just the \(d\)-th graded component of the linear change of coordinates \(\sym(\alpha)\from S \to S\).

This action determines an action of the projective general linear group \(\PGL_2(\C)\), or simply \(\PGL_2\), on \(\proj S_d\).
Namely, given \(\varphi\in\PGL_2\), say the class of \(\alpha\in\GL_2\), we denote by \(\sym^d(\varphi)\) the class of \(\sym^d(\alpha)\in \GL_{d+1}\) in \(\PGL_{d+1}\) (which is clearly well defined).
So, given \(F\in \proj S_d\),
\[
  \varphi\cdot F=\sym^d(\varphi)(F).
\]

\begin{proposition}\label{prop:rk-2-and-GL2-orbits}
  The set of forms \(f\in S_d\) with Waring rank two is equal to the orbit \(\GL_2\cdot (x^d-y^d)\).
\end{proposition}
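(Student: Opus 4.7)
The plan is to prove the two inclusions separately, using a direct change-of-coordinates argument for one direction and the factorization of $x^d-y^d$ for the other.

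For the forward inclusion, I would start by unwinding the definition: if $F\in S_d$ has Waring rank two, then $F=aL_1^d+bL_2^d$ for some nonzero scalars $a,b\in\C$ and two linear forms $L_1,L_2\in S_1$ that must be linearly independent (otherwise $F$ would be a scalar multiple of a single $d$-th power, hence of rank at most one). Since $\C$ is algebraically closed, I can pick $d$-th roots $\lambda,\mu\in\C\setminus\{0\}$ of $a$ and $-b$ respectively. Define $\alpha\in\GL_2$ by declaring $\alpha(x)=\lambda L_1$ and $\alpha(y)=\mu L_2$; this is invertible precisely because $L_1,L_2$ are linearly independent. Then
\[
  \alpha\cdot(x^d-y^d)=(\lambda L_1)^d-(\mu L_2)^d=aL_1^d+bL_2^d=F,
\]
which shows $F\in\GL_2\cdot(x^d-y^d)$.

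For the reverse inclusion, take any $\alpha\in\GL_2$ and set $F=\alpha\cdot(x^d-y^d)=(\alpha(x))^d-(\alpha(y))^d$. Since $\alpha(x),\alpha(y)$ are linearly independent, this expression already exhibits $F$ as a $\C$-linear combination of two $d$-th powers, so $\rk(F)\le 2$. To rule out $\rk(F)=1$, I would argue by contradiction: if $F=cM^d$ for some $M\in S_1$ and $c\in\C$, then applying $\alpha^{-1}$ would give $x^d-y^d=c'(\alpha^{-1}(M))^d$ for a suitable scalar $c'$, forcing $x^d-y^d$ to be a $d$-th power of a linear form. But $x^d-y^d=\prod_{\zeta^d=1}(x-\zeta y)$ splits into $d\ge 4$ pairwise distinct linear factors, so it cannot be a $d$-th power, a contradiction.

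There is no real obstacle here; the only subtle point is ensuring that the linear forms appearing in a rank-two decomposition are linearly independent (otherwise one would collapse to rank one) and extracting $d$-th roots, both of which are handled by working over $\C$. The argument also tacitly uses that Waring rank is invariant under the $\GL_2$-action, which follows immediately from the fact that $\sym^d(\alpha)$ sends $L^d$ to $(\alpha(L))^d$ and preserves linear combinations.
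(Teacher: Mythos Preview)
Your proof is correct and follows essentially the same approach as the paper: absorb the scalars into the linear forms via $d$-th roots and then perform a linear change of coordinates sending the two independent linear forms to $x$ and $y$. Your write-up is in fact more complete than the paper's, which only spells out the forward inclusion and leaves the reverse (that every element of the orbit has rank exactly two) implicit.
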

\begin{proof}
  Let \(f\in S_d\) be a form with Waring rank two.
  So, the form \(f\) decomposes as \(f=l_1^d-l_2^d\) and the linear forms \(l_1,l_2\in\proj S_1\) are non-proportional, or equivalently, pairwise \(\C\)-linearly independent.
  Hence, via a linear change of coordinates, we may assume \(l_1=x\) and \(l_2=y\).
\end{proof}

\begin{corollary}\label{coro:sq-freeness-of-rk-2}
  Every form in \(S\) with Waring rank two splits as a product of distinct linear forms.
\end{corollary}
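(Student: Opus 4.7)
The plan is to reduce immediately to the normal form provided by \cref{prop:rk-2-and-GL2-orbits} and then factor that normal form explicitly.

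First I would invoke \cref{prop:rk-2-and-GL2-orbits}: if $F\in S_d$ has Waring rank two, then there exists $\alpha\in\GL_2$ with $F=\alpha\cdot(x^d-y^d)=\sym^d(\alpha)(x^d-y^d)$. Here $d\ge 2$ (the rank one case being $d=1$ is irrelevant, and any rank two form has degree at least two since linear forms are rank one).

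Next I would factor the model form. Letting $\zeta=e^{2\pi i/d}$ be a primitive $d$-th root of unity, we have the classical identity
\[
x^d-y^d=\prod_{k=0}^{d-1}(x-\zeta^k y),
\]
which is a product of $d$ pairwise non-proportional linear forms in $S_1$.

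Finally I would push this factorization through $\alpha$. Since $\sym(\alpha)\from S\to S$ is a ring homomorphism (it is a linear change of coordinates) and preserves degrees, applying $\sym^d(\alpha)$ to the above product yields
\[
F=\prod_{k=0}^{d-1}\alpha\cdot(x-\zeta^k y),
\]
a product of $d$ linear forms. These factors remain pairwise non-proportional: indeed $\alpha$ acts as an invertible linear map on $S_1$, so it sends the distinct classes $[x-\zeta^k y]\in\proj S_1$ to distinct classes. Hence $F$ splits as a product of pairwise distinct linear forms.

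There is essentially no obstacle here; the whole content lies in \cref{prop:rk-2-and-GL2-orbits}, and the corollary just transports the obvious factorization of $x^d-y^d$ along a linear change of coordinates.
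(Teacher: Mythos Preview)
Your proof is correct and follows exactly the route the paper intends: the corollary is stated without proof immediately after \cref{prop:rk-2-and-GL2-orbits}, and your argument simply spells out the implicit reasoning (factor $x^d-y^d$ over $\C$ and transport along the invertible linear change of coordinates).
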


\begin{remark}\label{rmk:any-number-of-variables}
  Given a form \(F\) in a standard graded polynomial ring \(\field[x_1,\dots,x_n]\) with coefficients in a field \(\field\), there is an analogue notion to Waring rank of \(F\) (see \cite[Definition 3.3]{carlini-grieve-oeding-4-lectures}).
  Notice that in this section we have not used so far that \(S\) is the polynomial ring over \(\C\) in two variables.
  \cref{prop:rk-2-and-GL2-orbits} and \cref{coro:sq-freeness-of-rk-2} hold for every field \(\field\) and integer \(d\) such that the Waring rank of \(x_1^d-x_2^d\in\field[x_1,\dots,x_n]\) is two, for every element \(a\in\field\) there is \(b\in\field\) with \(b^d=a\) and \(\field\) contains all the \(d\)-th roots of unity.
\end{remark}

\begin{definition}\label{def:RNC}
  We denote by \(\mathcal{C}\) the rational normal curve in \(\proj S_d\) corresponding to the image of the map that sends \(L\in\proj S_1\) to \(L^d\in \proj S_d\).
\end{definition}

\begin{definition}\label{def:tangent-and-2nd-sec}
  We denote by \(T \mathcal{C}\subseteq \proj S_d\) the union of all the lines tangent to \(\mathcal{C}\) and by \(\Sigma\subseteq \proj S_d\) the union of all the lines spanned by two different points of \(\mathcal{C}\).
\end{definition}

\begin{remark}\label{rmk:2dn-sec-2-rank-forms}
  The closure \(\overline{\Sigma}\) is the second secant variety of \(\mathcal{C}\) and the set \(\Sigma\mysetminus \mathcal{C}\) is the set of all forms \(F\in \proj S_d\) with Waring rank two.
\end{remark}

\begin{proposition}\label{prop:3-kind-point-on-secant-var-of-RNC}
  We may classify the points of \(\overline{\Sigma}\) into two different kinds.
  \begin{itemize}
  \item The \emph{tangent points}, points lying on \(T\mathcal{C}\).
  \item The \emph{secant points}, points \(p\) lying on some line spanned by two different points of \(\mathcal{C}\) but \(p\not\in\mathcal{C}\).
  \end{itemize}
\end{proposition}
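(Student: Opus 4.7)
The plan is to show that every $F\in\overline{\Sigma}$ is either tangent or secant, using apolarity and Sylvester's Theorem. The key preliminary fact I would use is the catalecticant characterization of the second secant variety: $F\in\overline{\Sigma}$ if and only if the apolar ideal $F^\perp$ contains a nonzero form of degree at most $2$. (Equivalently, the $2\times(d-1)$ catalecticant matrix of $F$ has rank at most $2$. This fits cleanly with Sylvester's algorithm already used throughout the paper.)

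If $F\in\mathcal{C}$, then $F$ lies on its own tangent line and is a tangent point, so we may assume $F\in\overline{\Sigma}\mysetminus\mathcal{C}$. By the Structure Theorem cited in \cref{thr:sylvesters-algorithm}, $F^\perp=(g_1,g_2)$ with $\deg g_1\le\deg g_2$ and $\deg g_1+\deg g_2=d+2$. The catalecticant criterion forces $\deg g_1\le 2$; if $\deg g_1=1$ then $F$ would be proportional to a $d$-th power of a linear form, contradicting $F\notin\mathcal{C}$. Hence $\deg g_1=2$, and as a binary quadratic $g_1\in T_2$ factors over $\C$ as either $L_1L_2$ with $L_1,L_2\in\proj T_1$ non-proportional, or as $L^2$ for some $L\in\proj T_1$.

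In the first case, $g_1$ is square-free and Sylvester's algorithm gives $\rk(F)=2$, so $F=aM_1^d+bM_2^d$ with $M_1,M_2\in\proj S_1$ distinct and $a,b\in\C^*$, placing $F$ on the chord of $\mathcal{C}$ joining $M_1^d$ and $M_2^d$. Hence $F$ is a secant point. In the second case, exploiting the $\GL_2$-action on $S_d$ we may assume (after a linear change of coordinates) that $L=X$, so the condition $X^2\circ F=0$ yields $F=axy^{d-1}+by^d$ for some $(a,b)\in\C^2\mysetminus\{0\}$. Differentiating the parameterisation $t\mapsto (tx+y)^d$ of $\mathcal{C}$ at $t=0$ shows that the tangent line to $\mathcal{C}$ at $y^d$ is exactly $\{axy^{d-1}+by^d : (a,b)\ne (0,0)\}\subseteq\proj S_d$, so $F\in T\mathcal{C}$ and $F$ is a tangent point.

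The main obstacle is a clean handling of the catalecticant criterion: one must justify that $F\in\overline{\Sigma}$ really is equivalent to $F^\perp$ containing a nonzero degree-$2$ form, either by appealing to a standard reference (e.g.\ \cite{iarrobino-determinantal-loci}) or by a short limit argument that border-rank-$2$ forms are precisely the forms annihilated by a degree-$2$ apolar operator. Once this is in hand, the remainder of the argument is a two-line case split on whether the leading generator $g_1$ of $F^\perp$ is square-free or a perfect square, and the verification that a perfect-square $g_1$ places $F$ on the tangent line at the corresponding point of $\mathcal{C}$.
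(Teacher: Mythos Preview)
Your argument is correct, but it takes a genuinely different route from the paper. The paper's proof is a one-line citation: since the first secant variety of $\mathcal{C}$ is $\mathcal{C}$ itself, \cite[Proposition~10.15]{Eisenbud-Harris-3264} says that every point of $\overline{\Sigma}\mysetminus\mathcal{C}$ lies on the span of a \emph{unique} degree-$2$ divisor of $\mathcal{C}$; reduced divisors give secant points, nonreduced ones give tangent points.

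Your approach replaces this external citation by the catalecticant description of $\overline{\Sigma}$ together with the Structure Theorem and Sylvester's algorithm already in \cref{thr:sylvesters-algorithm}. This is more self-contained relative to the paper's toolkit, and it recovers exclusiveness of the two classes as well: the degree-$2$ generator $g_1$ of $F^\perp$ is unique up to scalar, so ``$g_1$ square-free'' versus ``$g_1$ a square'' is a genuine dichotomy, which is exactly the uniqueness of the degree-$2$ divisor asserted in the Eisenbud--Harris proposition. The trade-off is that you must justify the equivalence $F\in\overline{\Sigma}\Leftrightarrow (F^\perp)_2\neq 0$, which you flagged; this is standard (e.g.\ \cite{iarrobino-determinantal-loci}) but is not stated in the paper, so your proof swaps one external reference for another of comparable weight.
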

\begin{proof}
  The first secant variety of \(\mathcal{C}\) is \(\mathcal{C}\) itself. Then, by \cite[Proposition 10.15]{Eisenbud-Harris-3264} a point in the subset \(\overline{\Sigma}\mysetminus \mathcal{C}\) of \(\overline{\Sigma}\) lies on the span of a unique divisor of degree 2 of \(\mathcal{C}\).
\end{proof}

\begin{remark}\label{rmk:properties-of-satrata-of-Sigma}
  Let \(p\) be a point of \(\overline{\Sigma}\).
  \begin{itemize}

  \item If \(p\) is a tangent point, there are \(L,N\in\proj S_1\) with \(p=NL^{d-1}\) and then its Waring rank is either \(d\) or \(1\).
    Furthermore, by \cref{thr:maximal-rank-forms}, the converse implication also holds.
  \item If \(p\) is a secant point, it corresponds to a form \(F\in\proj S_d\) with Waring rank 2 and then, by \cref{coro:sq-freeness-of-rk-2}, \(F\) is square free.
  \end{itemize}
\end{remark}

\begin{definition}\label{def:pi-and-pi}
  Given \(\mathscr{L}=(L_1,L_2,L_3) \in\ppp\), we denote by \(\Pi(\mathscr{L})\) the set of forms \(F\in \proj S_d\) that are multiple of \(L_1L_2L_3\).
  We denote by \(\Pi\) the particular case \(\mathscr{L}=(x+y,x,y)\).
\end{definition}

\begin{remark}\label{rmk:linearity-of-pi}
  Given \(\mathscr{L}=(L_1,L_2,L_3) \in\ppp\), consider \(\alpha_1,\alpha_2,\alpha_3\in \proj^1\) the respective roots of \(L_1,L_2,L_3\in\proj S_1\).
  Observe that \(\Pi(\mathscr{L})\subseteq \proj S_d\) is just the \emph{linear subspace} of \(\proj S_d\) corresponding to the forms \(F\in\proj S_d\) with roots at \(\alpha_1,\alpha_2,\alpha_2\in\proj^1\).
  In particular, the \emph{codimension} of \(\Pi(\mathscr{L})\subseteq\proj S_d\) is three.
\end{remark}

\begin{proposition}\label{prop:rank2-forms-on-Pi2}
  Let \(\mathscr{L}=(L_1,L_2,L_3)\in\ppp\).
  The Waring rank of every \(F\in\Pi(\mathscr{L})\cap \overline{\Sigma}\) is two.
\end{proposition}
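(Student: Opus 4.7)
The plan is to show that the only way for $F \in \Pi(\mathscr{L}) \cap \overline{\Sigma}$ to exist is as a secant point of $\overline{\Sigma}$, from which the Waring rank two conclusion is immediate.

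First I would unpack what membership in $\Pi(\mathscr{L})$ forces on $F$: by \cref{def:pi-and-pi}, $F$ is a multiple of $L_1L_2L_3$ where $L_1,L_2,L_3\in\proj S_1$ are pairwise distinct. Hence $F$ has at least three pairwise distinct linear factors (equivalently, three distinct roots in $\proj^1$).

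Next I would invoke the dichotomy of \cref{prop:3-kind-point-on-secant-var-of-RNC}: any point of $\overline{\Sigma}$ is either a tangent point of $\mathcal{C}$ or a secant point of $\mathcal{C}$. Suppose first that $F$ is a tangent point. By \cref{rmk:properties-of-satrata-of-Sigma}, one can write $F = NL^{d-1}$ for some $L,N\in\proj S_1$, so $F$ has at most two distinct linear factors. This contradicts the three distinct factors coming from $L_1L_2L_3$ (recall $d\ge 4$, so no collapse of factors of $L_1L_2L_3$ can rescue this case). Therefore $F$ must be a secant point.

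Finally, I would apply the second bullet of \cref{rmk:properties-of-satrata-of-Sigma}: a secant point of $\overline{\Sigma}$ corresponds precisely to a form of Waring rank two. This gives $\rk(F)=2$, concluding the proof. I do not anticipate a real obstacle here: the argument is a direct combination of the stratification of $\overline{\Sigma}$ with a factor-counting observation, and the hypothesis $d\ge 4$ ensures that $L_1L_2L_3$ alone already forces strictly more distinct roots than a tangent point can accommodate.
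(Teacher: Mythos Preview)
Your proposal is correct and follows essentially the same approach as the paper. The paper's proof is a one-sentence application of \cref{rmk:properties-of-satrata-of-Sigma}, observing that a multiple of three distinct linear forms cannot be a tangent point and hence must have Waring rank two; you have simply unpacked that remark into the explicit tangent/secant dichotomy and the factor-counting contradiction.
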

\begin{proof}
  Since a form \(F\in \Pi(\mathscr{L})\cap \overline{\Sigma}\) is a multiple of three distinct linear forms, by \cref{rmk:properties-of-satrata-of-Sigma} the Waring rank of \(F\) is two.
\end{proof}

We denote by \(\Theta\subseteq \C\) the set of the \(d\)-th roots of the unity and we fix the usual anticlockwise order \(\{\xi_i\}_{i\in\nat_d}\) for \(\Theta\) where \(\xi_0=1\).

\begin{definition}\label{def:The-Gamma-map}
  Fix \(\mathscr{L}=(L_1,L_2,L_3) \in \ppp\).
  We define the map \(\Gamma_{\mathscr{L}}\from (\ttt)\to\Pi(\mathscr{L})\cap \overline{\Sigma}\) as follows.
  Given \(\xi=(\xi_i,\xi_j,\xi_k)\in \ttt\), there is a unique \(\varphi\in \PGL_2\) with
  \begin{align*}
    \varphi([x-\xi_iy]) &=L_1, & \varphi([x-\xi_jy]) &= L_2, & \varphi([x-\xi_ky]) &=L_3.
  \end{align*}
  Then we set \(\Gamma_{\mathscr{L}}(\xi)=\varphi\cdot [x^d-y^d]\).
  For the particular case \(\mathscr{L}=(x+y,x,y)\), we denote \(\Gamma_{\mathscr{L}}\) simply by \(\Gamma\).
\end{definition}

\begin{proposition}\label{prop:gamma-sujective}
  For all \(\mathscr{L}=(L_1,L_2,L_3) \in\ppp\), the map \(\Gamma_{\mathscr{L}}\) is surjective.
\end{proposition}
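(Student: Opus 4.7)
The plan is to unfold the definition of rank-two forms via the $\GL_2$-orbit description and then match linear factors.

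First, I would take an arbitrary $F\in\Pi(\mathscr{L})\cap\overline{\Sigma}$. By \cref{prop:rank2-forms-on-Pi2} the Waring rank of $F$ is two, so by \cref{prop:rk-2-and-GL2-orbits} there exists $\psi\in\PGL_2$ with $F=\psi\cdot[x^d-y^d]$. Since $x^d-y^d$ factors as $\prod_{l\in\nat_d}(x-\xi_l y)$, applying $\sym^d(\psi)$ yields the factorisation
\[
  F=\prod_{l\in\nat_d}\psi([x-\xi_l y]),
\]
which expresses $F$ as a product of $d$ linear forms in $\proj S_1$.

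Second, because $F\in\Pi(\mathscr{L})$, $F$ is divisible by the product $L_1L_2L_3$ of three pairwise non-proportional linear forms. By \cref{coro:sq-freeness-of-rk-2}, $F$ is square-free, so any linear factor of $F$ in $\proj S_1$ appears only once. Consequently $L_1,L_2,L_3$ must coincide with three of the factors above, coming from pairwise distinct indices: there exist distinct $i,j,k\in\nat_d$ with $\psi([x-\xi_i y])=L_1$, $\psi([x-\xi_j y])=L_2$, $\psi([x-\xi_k y])=L_3$.

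Third, I would invoke the fact that a projective transformation of $\proj^1$ is uniquely determined by its values on three distinct points. Hence $\psi$ equals the unique $\varphi\in\PGL_2$ associated by \cref{def:The-Gamma-map} to the triple $\xi=(\xi_i,\xi_j,\xi_k)\in\ttt$, and therefore $\Gamma_{\mathscr{L}}(\xi)=\varphi\cdot[x^d-y^d]=\psi\cdot[x^d-y^d]=F$, proving surjectivity.

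The only delicate point is the square-freeness step that guarantees the three labels $i,j,k$ are pairwise distinct; this is where \cref{coro:sq-freeness-of-rk-2} is essential. Everything else is a direct combination of the $\GL_2$-orbit description of rank-two forms and the three-points-determine-a-Möbius-transformation principle.
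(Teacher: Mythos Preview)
Your argument is correct and is essentially the paper's own proof: both take $F\in\Pi(\mathscr{L})\cap\overline{\Sigma}$, invoke \cref{prop:rank2-forms-on-Pi2} and \cref{prop:rk-2-and-GL2-orbits} to write $F=\psi\cdot[x^d-y^d]$ for some $\psi\in\PGL_2$, read off the three roots of unity $\xi_i,\xi_j,\xi_k$ whose associated linear factors are mapped by $\psi$ to $L_1,L_2,L_3$, and conclude $\Gamma_{\mathscr{L}}(\xi_i,\xi_j,\xi_k)=F$. One minor remark: the appeal to \cref{coro:sq-freeness-of-rk-2} is harmless but not strictly needed for the distinctness of $i,j,k$, since $L_1,L_2,L_3$ are already pairwise distinct in $\proj S_1$ and $\psi$ is a bijection there, so their preimages $[x-\xi_i y],[x-\xi_j y],[x-\xi_k y]$ are automatically distinct.
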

\begin{proof}
  Given \(F\in\Pi(\mathscr{L})\cap\overline{\Sigma}\), first by definition of \(\Pi(\mathscr{L})\), there is \(G\in\proj S_{d-3}\) with \(F=L_1L_2L_3G\) and, second by \cref{prop:rank2-forms-on-Pi2} and \cref{prop:rk-2-and-GL2-orbits}, there is \(\varphi\in\PGL_2\) with \(\varphi\cdot [x^d-y^d]=F\).

  So, for every \(i=1,2,3\) there is \(\xi_{j_i}\in\Theta\) with \(\varphi^{-1}(L_i)=[x-\xi_{j_i}y]\) and then \(\Gamma_{\mathscr{L}}(\xi_{j_1},\xi_{j_2},\xi_{j_3})=F\).
\end{proof}

\begin{corollary}\label{coro:finite-cardinal-of-Pi2}
  The cardinal of \(\Pi(\mathscr{L})\cap\overline{\Sigma}\) is at most \(d(d-1)(d-2)\), the cardinal of \(\ttt\).
  In particular, it is finite.
\end{corollary}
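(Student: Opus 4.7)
The plan is to obtain this corollary as an immediate consequence of the surjectivity of $\Gamma_{\mathscr{L}}$ established in \cref{prop:gamma-sujective}, together with an elementary count of ordered triples of distinct $d$-th roots of unity.

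First I would invoke \cref{prop:gamma-sujective} to conclude that the set $\Pi(\mathscr{L})\cap\overline{\Sigma}$ is the image of the map $\Gamma_{\mathscr{L}}\from (\ttt)\to\Pi(\mathscr{L})\cap\overline{\Sigma}$. Since any surjective map from a set $A$ onto a set $B$ forces $\card{B}\le\card{A}$, we immediately get
\[
\card{\Pi(\mathscr{L})\cap\overline{\Sigma}}\le \card{\ttt}.
\]

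Next I would compute $\card{\ttt}$. Because $\Theta$ is the set of $d$-th roots of unity, $\card{\Theta}=d$. By definition $\ttt$ consists of ordered triples of pairwise distinct elements of $\Theta$, so a standard counting argument (choose the first coordinate in $d$ ways, the second in $d-1$ ways among the remaining elements, and the third in $d-2$ ways) gives
\[
\card{\ttt}=d(d-1)(d-2).
\]

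Combining the two displays yields $\card{\Pi(\mathscr{L})\cap\overline{\Sigma}}\le d(d-1)(d-2)$, which is in particular finite, as claimed. There is no real obstacle here: the surjectivity in \cref{prop:gamma-sujective} does all of the geometric work, and what remains is a one-line cardinality estimate. The only thing to keep track of is that $\Gamma_{\mathscr{L}}$ indeed takes values in $\Pi(\mathscr{L})\cap\overline{\Sigma}$, but this is built into the definition of $\Gamma_{\mathscr{L}}$ (the target is $\Pi(\mathscr{L})\cap\overline{\Sigma}$) together with \cref{prop:rank2-forms-on-Pi2}.
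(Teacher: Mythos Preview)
Your proposal is correct and matches the paper's approach exactly: the corollary is stated without proof immediately after \cref{prop:gamma-sujective}, so it is meant to follow precisely by the surjectivity of $\Gamma_{\mathscr{L}}$ and the obvious count $\card{\ttt}=d(d-1)(d-2)$, which is just what you do.
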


Let us describe the map \(\Gamma\) which has a particular neat description.
Fix \(\xi=(\xi_i,\xi_j,\xi_k)\in\ttt\).
Observe that the map \(\varphi\) in the \cref{def:The-Gamma-map} is a Möbius transformation.
Consider the affine chart \(U\subseteq\proj S_1\) corresponding to the coefficient of \(x\) being different from zero.
The map \(\varphi|_U\) sends respectively \(-\xi_i,-\xi_j,-\xi_k\) to \(1,0,\infty\).
So, for all \(l\in\nat_d\mysetminus\{k\}\)
\[
  \varphi([x-\xi_ly])=[x+(\xi_l,\xi_i;\xi_j,\xi_k)y],
\] where \((\xi_l,\xi_i;\xi_j,\xi_k)\) denotes the cross ratio of four complex numbers, and then
\[
  \Gamma(\xi)=\big[(x+y)xy\prod_{l\in\nat_d\mysetminus \{i,j,k\}} (x+(\xi_l,\xi_i;\xi_j,\xi_k)y)\big]\in\Pi\cap\overline{\Sigma}.
\] In this way, by \cref{prop:gamma-sujective}, we get an explicit description of \(\Pi\cap\overline{\Sigma}\),
\begin{align}\label{eq:R-0-via-triplets-of-integers}
  \Pi\cap\overline{\Sigma}=\im{\Gamma}\subseteq\proj S_d.
\end{align}

For example when \(d=4\), the \(4\)-th roots of the unity are the vertices of a square, hence their cross ratio is the harmonic ratio \(-1\), or one of its conjugates \(2\) or \(\frac{1}{2}\), and then
\[
  \Pi\cap\overline{\Sigma}=\{xy(x+y)(x-y),\ xy(x+y)(x+2y),\ xy(x+y)(x+\frac{1}{2}y)\}.
\] In this case we even know that the Waring rank of every \(F\in \Pi\mysetminus (\Pi\cap\overline{\Sigma})\) is \(3\), since clearly it is not \(1\) nor \(2\) and it is not \(4\) by \cref{thr:maximal-rank-forms}.
\medskip

We denote by \(\mathbf{D}\) the dihedral group of order \(2d\), which is the group of symmetries of \(\Theta\) (see~\cite[Section 1.2]{dummit-abstract-algebra}).
We consider \(\mathbf{D}\) acting componentwise on \(\ttt\).

\begin{lemma}\label{lem:cardinal-quotien-dihedral}
  The cardinal of \((\ttt)/\mathbf{D}\) is \(\binom{d-1}{2}\).
\end{lemma}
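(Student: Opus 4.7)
The plan is to use the orbit-stabiliser theorem: since \(|\Theta|=d\) and \(|\mathbf{D}|=2d\), we have \(|\ttt|=d(d-1)(d-2)\), and the desired count \(\binom{d-1}{2}=\tfrac{d(d-1)(d-2)}{2d}\) is exactly \(|\ttt|/|\mathbf{D}|\). So it suffices to show that \(\mathbf{D}\) acts \emph{freely} on \(\ttt\).

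First I would analyse which elements of \(\mathbf{D}\) have fixed points in \(\Theta\). Since \(\mathbf{D}\) acts componentwise, an element \(g\in\mathbf{D}\) fixes a triple \((\xi_i,\xi_j,\xi_k)\in\ttt\) if and only if \(g\) individually fixes each of the three (pairwise distinct) elements \(\xi_i,\xi_j,\xi_k\in\Theta\). Thus it is enough to check that no nontrivial element of \(\mathbf{D}\) has as many as three fixed points in \(\Theta\).

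Next I would split \(\mathbf{D}\) into rotations and reflections. A nontrivial rotation of the regular \(d\)-gon has no fixed vertices, so it fixes no element of \(\Theta\) at all. A reflection is an involution whose axis either passes through a pair of opposite vertices (fixing exactly \(2\) elements of \(\Theta\), possible only when \(d\) is even) or through midpoints of opposite edges (fixing \(0\) elements). In either case, a reflection has at most \(2\) fixed points in \(\Theta\), and so cannot fix three pairwise distinct roots of unity.

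Combining these observations, the stabiliser of every \((\xi_i,\xi_j,\xi_k)\in\ttt\) is trivial, so the action of \(\mathbf{D}\) on \(\ttt\) is free. Each orbit therefore has size \(2d\), and the number of orbits equals
\[
\frac{|\ttt|}{|\mathbf{D}|}=\frac{d(d-1)(d-2)}{2d}=\binom{d-1}{2},
\]
as claimed. I do not anticipate any serious obstacle; the only mildly delicate point is the case analysis for reflections when \(d\) is even, which is standard.
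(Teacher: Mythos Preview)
Your proof is correct and follows essentially the same approach as the paper: both argue that no nontrivial element of \(\mathbf{D}\) fixes more than two points of \(\Theta\), hence the componentwise action on \(\ttt\) is free, and the orbit count is \(\lvert\ttt\rvert/\lvert\mathbf{D}\rvert=\binom{d-1}{2}\). One small slip: your enumeration of reflection axes covers only even \(d\); for odd \(d\) each reflection axis passes through one vertex and the midpoint of the opposite edge, fixing exactly one element of \(\Theta\), but this of course still yields at most two fixed points, so the conclusion is unaffected.
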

\begin{proof}
  It is a standard application of Burnside’s Lemma (see \cite[Exercice 8]{dummit-abstract-algebra}), but this case is particularly simple.
  Since every non-trivial \(\sigma\in \mathbf{D}\) acting on \(\Theta\) fixes at most two points, for all \(\xi\in\ttt\) the cardinals of the orbits \(\mathbf{D}\xi\) are equal to the cardinal of \(\mathbf{D}\).
  Hence,
  \[ \card{(\ttt)/\mathbf{D}}=\frac{\card{\ttt}}{\card{\mathbf{D}}}=\binom{d-1}{2}.\hfill\qedhere
  \]
\end{proof}

\begin{theorem}\label{thr:Gamma-dihedral-cover}
  For every \(\mathscr{L}\in\ppp\), the map \(\Gamma_{\mathscr{L}}\from \ttt\to \Pi(\mathscr{L})\cap\overline{\Sigma}\) is a \(\mathbf{D}\)-cover of \(\Pi(\mathscr{L})\cap\overline{\Sigma}\).
  In particular, the cardinal of the partition \((\ttt)/\Gamma_{\mathscr{L}}\) does not depend on \(\mathscr{L}\in\ppp\).
\end{theorem}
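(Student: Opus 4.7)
To establish the theorem I will verify the two conditions of \cref{def:G-cover}: that $\mathbf{D}$ acts faithfully on $\ttt$, and that the partitions $(\ttt)/\mathbf{D}$ and $(\ttt)/\Gamma_{\mathscr{L}}$ coincide. Faithfulness is clear; in fact the action is free, since any non-trivial $\sigma \in \mathbf{D}$ fixes at most two points of $\Theta$ and therefore no triple of pairwise distinct elements. Once the $\mathbf{D}$-cover property is proved, the ``in particular'' clause is immediate because $(\ttt)/\mathbf{D}$ manifestly does not depend on $\mathscr{L}$.

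The crucial observation is that $\mathbf{D}$ embeds into the projective stabiliser of $[x^d - y^d]$ in $\PGL_2$: the rotation $\xi_l \mapsto \xi_{l+m}$ is realised by the Möbius transformation $\mu_\sigma \colon z \mapsto \xi_m z$, and the reflection $\xi_l \mapsto \xi_{m-l}$ by $\mu_\sigma \colon z \mapsto \xi_m/z$. Both types permute $\Theta$ and therefore fix $[x^d - y^d]$, and the $2d$ transformations so obtained form a copy of $\mathbf{D}$ inside $\PGL_2$. Given $\xi \in \ttt$ and $\sigma \in \mathbf{D}$, if $\varphi$ and $\varphi'$ are the Möbius transformations associated by \cref{def:The-Gamma-map} to $\xi$ and $\sigma \cdot \xi$, then uniqueness forces $\varphi' = \varphi \circ \mu_\sigma^{-1}$, and consequently
\[
\Gamma_{\mathscr{L}}(\sigma \cdot \xi) = \varphi' \cdot [x^d - y^d] = \varphi \cdot \mu_\sigma^{-1} \cdot [x^d - y^d] = \varphi \cdot [x^d - y^d] = \Gamma_{\mathscr{L}}(\xi).
\]
This shows that $(\ttt)/\mathbf{D}$ is finer than $(\ttt)/\Gamma_{\mathscr{L}}$. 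Conversely, if $\Gamma_{\mathscr{L}}(\xi) = \Gamma_{\mathscr{L}}(\xi')$ with associated $\varphi, \varphi'$, then $\mu := \varphi'^{-1}\varphi$ lies in the projective stabiliser of $[x^d - y^d]$ and sends $[x - \xi_l y]$ to $[x - \xi_{l'} y]$ for the corresponding entries of $\xi, \xi'$. Granting that this stabiliser equals our copy of $\mathbf{D}$, we get $\mu = \mu_\sigma$ for a unique $\sigma \in \mathbf{D}$ and $\sigma \cdot \xi = \xi'$ follows.

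The main obstacle is therefore showing that the projective stabiliser of $[x^d - y^d]$ in $\PGL_2(\C)$ is precisely $\mathbf{D}$. Every element of this stabiliser permutes $\Theta \subseteq \proj^1$, so the stabiliser is a finite subgroup of $\PGL_2(\C)$ containing $\mathbf{D}$. I would invoke the classical classification of finite subgroups of $\PGL_2(\C)$ as cyclic, dihedral, or one of the exceptional groups $A_4, S_4, A_5$: non-abelianness of $\mathbf{D}$ rules out the cyclic case, and any dihedral extension $\mathbf{D}_n \supseteq \mathbf{D}$ would have its order-$n$ rotation $z \mapsto \zeta_n z$ preserving $\Theta$, forcing $\zeta_n^d = 1$ and hence $n = d$. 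For $d \geq 6$ the exceptional groups are excluded by cardinality or by the absence of an element of order $d$, and for the remaining cases $d = 4, 5$ one checks directly that the octahedral subgroup $S_4$ and the icosahedral subgroup $A_5$ of $\PGL_2(\C)$ admit no orbit of size $4$ or $5$ on $\proj^1$ (their exceptional orbit sizes being $6, 8, 12, 20, 30$), while $A_4 \not\supseteq \mathbf{D}_4$ since $A_4$ contains no element of order $4$.
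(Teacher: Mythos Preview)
Your proof is correct, and it takes a genuinely different route from the paper's.

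Both arguments establish the ``easy'' direction — that $(\ttt)/\mathbf{D}$ refines $(\ttt)/\Gamma_{\mathscr{L}}$ — in essentially the same way, by realising $\mathbf{D}$ inside $\PGL_2$ via Möbius transformations fixing $[x^d-y^d]$ (the paper phrases this as invariance of the cross ratio). The divergence is in the converse. You identify the fibre of $\Gamma_{\mathscr{L}}$ through $\xi$ with the orbit of $\xi$ under the projective stabiliser of $[x^d-y^d]$, and then prove that this stabiliser equals $\mathbf{D}$ by invoking the classification of finite subgroups of $\PGL_2(\C)$ together with the element-order and orbit-size checks for $d=4,5$. The paper avoids that classification entirely: it introduces an auxiliary combinatorial invariant $\mathbf{n}=\mathbf{m}\colon \ttt\to\nat^3$ recording the numbers of $d$-th roots in the three arcs cut out by $\xi$, shows $(\ttt)/\Gamma$ refines $(\ttt)/\mathbf{m}$, and then closes the sandwich $(\ttt)/\mathbf{D} \preceq (\ttt)/\Gamma \preceq (\ttt)/\mathbf{m}$ by a cardinality count, both ends having $\binom{d-1}{2}$ classes (\cref{lem:cardinal-quotien-dihedral} and \cref{lem:about-m-and-n}).

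Your approach is more direct and conceptually transparent — it names exactly what the fibres are — at the cost of importing the Klein classification and a small case analysis. The paper's argument is fully self-contained and, as a bonus, its cardinality count simultaneously yields the value $\binom{d-1}{2}$ needed for \cref{prop:cardinal-of-Pi2}; with your approach that value still has to be supplied separately via \cref{lem:cardinal-quotien-dihedral} (which you implicitly use anyway when you observe the action is free). One minor point worth tightening: when you say a dihedral overgroup $\mathbf{D}_n\supseteq\mathbf{D}$ has its order-$n$ rotation of the form $z\mapsto\zeta_n z$, this uses that the rotation subgroup of $\mathbf{D}_n$ must contain the rotation subgroup of $\mathbf{D}$ and hence share its fixed points $0,\infty$; that step deserves a sentence.
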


To prove \cref{thr:Gamma-dihedral-cover} we introduce two maps \(\mathbf{n}\), \(\mathbf{m}\) and we establish some of their elementary properties.
\medskip

Let \(\mathbb{S}^1\subseteq \C\) denote the unit circle.
For each \(\xi=(\xi_i,\xi_j,\xi_k) \in(\ttt)\), first we denote by \(\gamma_{\xi}\from \mathbb{S}^1\to \reals\cup\{\infty\}\) the map sending \(p\in\mathbb{S}^1\) to the cross ratio \(-(p,\xi_i;\xi_j,\xi_k)\in\reals\cup\{\infty\}\).
Second, we define \(\mathcal{S}^{\xi}_{i,j}\) as the unique connected component of the open subset \(\mathbb{S}^1\mysetminus \{\xi_i,\xi_j\}\subseteq\mathbb{S}^1\) such that \(\xi_k\not\in \mathcal{S}_{i,j}^{\xi}\). Similarly, we define the open subsets \(\mathcal{S}^{\xi}_{j,k}\subseteq \mathbb{S}^1\) and \(\mathcal{S}^{\xi}_{k,i}\subseteq\mathbb{S}^1\) of \(\mathbb{S}^1\).
Then, we define the maps \(\mathbf{n}\from \ttt\to \nat^3\) and \(\mathbf{m}\from \ttt\to \nat^3\) as sending \(\xi\in(\ttt)\) to
\begin{align*}
  \mathbf{n}(\xi)& =\Bigl(\card{\Theta\cap \mathcal{S}^{\xi}_{i,j}},~\card{\Theta\cap \mathcal{S}^{\xi}_{j,k}},~\card{\Theta\cap \mathcal{S}^{\xi}_{k,i}}\Bigr) \\
  \mathbf{m}(\xi)& =\Bigl(\card{\gamma_{\xi}(\Theta\cap \mathcal{S}^{\xi}_{i,j})},~\card{\gamma_{\xi}(\Theta\cap \mathcal{S}^{\xi}_{j,k})},~\card{\gamma_{\xi}(\Theta\cap \mathcal{S}^{\xi}_{k,i})}\Bigr).
\end{align*}

\begin{figure}
  \centering
  \begin{tikzpicture}
    \draw
    (0,0) coordinate (A)
    arc [start angle=0, end angle=60, radius=1] coordinate (a) 
    arc [start angle=60, end angle=120, radius=1] coordinate (B) 
    arc [start angle=120, end angle=180, radius=1] coordinate (b) 
    arc [start angle=180, end angle=240, radius=1] coordinate (C) 
    arc [start angle=240, end angle=300, radius=1] coordinate (c) 
    arc [start angle=300, end angle=360, radius=1]; 
    \filldraw[black] (A) circle (1pt) node[right] {$\xi_1$};
    \filldraw[black] (a) circle (1pt) node[xshift=2mm, yshift=2mm] {$\xi_2$};
    \filldraw[black] (B) circle (1pt) node[xshift=-2mm, yshift=2mm] {$\xi_3$};
    \filldraw[black] (b) circle (1pt) node[left] {$\xi_4$};
    \filldraw[black] (C) circle (1pt) node[xshift=-2mm, yshift=-2mm] {$\xi_5$};
    \filldraw[black] (c) circle (1pt) node[xshift=2mm, yshift=-2mm] {$\xi_6$};
  \end{tikzpicture}
  \caption{The set \(\Theta\subseteq \mathbb{S}^1\) for \(d=6\).}
  \label{pic-order-sixth-roots}
\end{figure}
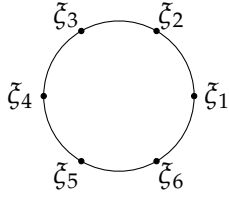

\begin{figure}
  \centering
  \begin{tikzpicture}
    \draw[color=red]
    (0,0) coordinate (A)
    arc [start angle=0, end angle=60, radius=1] coordinate (a) 
    arc [start angle=60, end angle=120, radius=1] coordinate (B); 

    \draw[color=blue] (B)
    arc [start angle=120, end angle=180, radius=1] coordinate (b) 
    arc [start angle=180, end angle=240, radius=1] coordinate (C); 
    \draw[color=green] (C)
    arc [start angle=240, end angle=300, radius=1] coordinate (c) 
    arc [start angle=300, end angle=360, radius=1]; 
    \filldraw[black] (A) circle (1pt) node[right] {$\xi_i$};
    \node[xshift=2mm, yshift=2mm] at (a) {$\mathcal{S}_{i,j}^{\xi}$};
    \filldraw[black] (B) circle (1pt) node[xshift=-2mm, yshift=2mm] {$\xi_j$};
    \node[left] at (b) {$\mathcal{S}_{j,k}^{\xi}$};
    \filldraw[black] (C) circle (1pt) node[xshift=-2mm, yshift=-2mm] {$\xi_k$};
    \node[xshift=2mm, yshift=-2mm] at (c) {$\mathcal{S}_{k,i}^{\xi}$};

    \draw[color=green]
    (6,0)
    arc [start angle=0, end angle=30, radius=1] coordinate (m1)
    arc [start angle=30, end angle=90, radius=1] coordinate (I); 
    \draw[color=blue] (I)
    arc [start angle=90, end angle=180, radius=1] coordinate (m2) 
    arc [start angle=180, end angle=210, radius=1] coordinate (Z); 
    \draw[color=red] (Z)
    arc [start angle=210, end angle=270, radius=1] coordinate (m3) 
    arc [start angle=270, end angle=330, radius=1] coordinate (U); 
    \draw[color=green] (U)
    arc [start angle=330, end angle=360, radius=1]; 

    \filldraw[black] (I) circle (1pt) node[above] {$\infty$};
    \filldraw[black] (Z) circle (1pt) node[xshift=-2mm, yshift=-2mm] {$0$};
    \filldraw[black] (U) circle (1pt) node[xshift=2mm, yshift=-2mm] {$-1$};
    \node[right] at (m1) {$\gamma_{\xi}(\mathcal{S}_{k,i}^{\xi})$};
    \node[left] at (m2) {$\gamma_{\xi}(\mathcal{S}_{j,k}^{\xi})$};
    \node[below] at (m3) {$\gamma_{\xi}(\mathcal{S}_{i,j}^{\xi})$};

    \draw[->, thick] (.75,.75) arc (130:90:2) coordinate (f) arc (90:50:2);
    \node[below] at (f) {$\gamma_{\xi}$};
    \node[above] at (f) {$\cong$};
  \end{tikzpicture}
  \caption{The map \(\gamma_{\xi}\) that sends resp. \(\xi_i,\xi_j,\xi_k\) to \(-1,0,\infty\).}
  \label{map-varphi}
\end{figure}
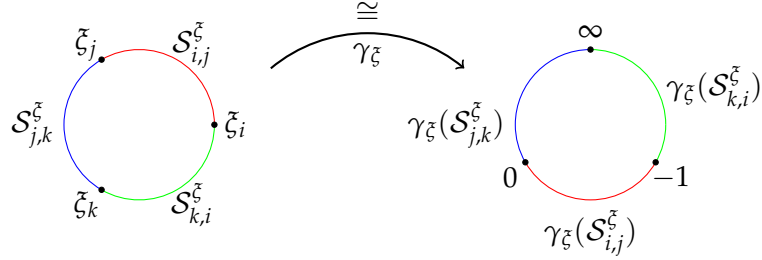

For example in \cref{pic-order-sixth-roots}, for \(\xi=(\xi_1,\xi_3,\xi_6)\) the value \(\mathbf{n}(\xi)\) is \((1,2,0)\) or for \(\xi=(\xi_6,\xi_3,\xi_1)\), it is \(\mathbf{n}(\xi)=(2,1,0)\).

\begin{lemma}\label{lem:about-m-and-n}
  Let \(\mathbf{n}\) and \(\mathbf{m}\) be the maps defined above.

  \begin{enumerate}
  \item\label{lem:about-m-and-n-it1} The maps \(\mathbf{n}\) and \(\mathbf{m}\) are equal.
  \item\label{lem:about-m-and-n-it2} The partition \((\ttt)/\Gamma\) is finer than \((\ttt)/\mathbf{m}\).
  \item\label{lem:about-m-and-n-it3} The cardinal of the image of \(\mathbf{n}\) is \(\binom{d-1}{2}\).
  \end{enumerate}
\end{lemma}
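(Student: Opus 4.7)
My plan is to treat the three items in turn, exploiting that $\gamma_\xi$ is a Möbius transformation (hence a bijection $\proj^1(\C)\to\proj^1(\C)$ restricting to a bijection $\mathbb{S}^1\to\reals\cup\{\infty\}$) and that $\Gamma(\xi)$ is determined, up to scalar, by its root set in $\proj^1(\C)$.

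For \cref{lem:about-m-and-n-it1}, the key observation is that $\gamma_\xi$ is bijective, so the restriction of $\gamma_\xi$ to any subset of $\mathbb{S}^1$ is injective. Applying this to $\Theta\cap\mathcal{S}^\xi_{i,j}$ and to the other two arcs shows that the corresponding components of $\mathbf{n}(\xi)$ and $\mathbf{m}(\xi)$ agree.

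For \cref{lem:about-m-and-n-it2}, I first identify the images of the three arcs $\mathcal{S}^\xi_{i,j},\mathcal{S}^\xi_{j,k},\mathcal{S}^\xi_{k,i}$ under $\gamma_\xi$. The points $-1,0,\infty=\gamma_\xi(\xi_i),\gamma_\xi(\xi_j),\gamma_\xi(\xi_k)$ divide $\reals\cup\{\infty\}$ into the three arcs $(-1,0)$, $(0,\infty)$ and $(-\infty,-1)\cup\{\infty\}$; since $\mathcal{S}^\xi_{i,j}$ avoids $\xi_k$, its $\gamma_\xi$-image is the arc between $-1$ and $0$ not containing $\infty$, namely $(-1,0)$, and analogous identifications hold for the other two arcs. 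Using the explicit formula for $\Gamma$ derived right after \cref{coro:finite-cardinal-of-Pi2}, the roots of $\Gamma(\xi)$ in $\proj^1$ are precisely $\gamma_\xi(\Theta)$. Thus $\mathbf{m}(\xi)$ just records how many roots of $\Gamma(\xi)$ lie in each of the three fixed open intervals $(-1,0)$, $(0,\infty)$ and $(-\infty,-1)$. Since the root set of $\Gamma(\xi)$ depends only on the class $[\Gamma(\xi)]\in\proj S_d$, the triple $\mathbf{m}(\xi)$ is constant on the fibres of $\Gamma$, which is exactly the assertion that $(\ttt)/\Gamma$ refines $(\ttt)/\mathbf{m}$.

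For \cref{lem:about-m-and-n-it3}, I note that the three components of $\mathbf{n}(\xi)$ are non-negative integers summing to $d-3$, so $\im(\mathbf{n})$ lies in the set of ordered compositions of $d-3$ into three parts, which has cardinality $\binom{(d-3)+2}{2}=\binom{d-1}{2}$. To see that every such composition $(n_1,n_2,n_3)$ is attained, I will give an explicit preimage using the cyclic ordering $\xi_0,\xi_1,\dots,\xi_{d-1}$ of $\Theta$: set
\[
  \xi = \bigl(\xi_0,\ \xi_{n_1+1},\ \xi_{n_1+n_2+2}\bigr)\in\ttt.
\]
Counting the roots of unity lying strictly between consecutive chosen indices on $\mathbb{S}^1$ yields $n_1,n_2$, and $d-3-n_1-n_2=n_3$ roots respectively, so $\mathbf{n}(\xi)=(n_1,n_2,n_3)$. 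The main subtlety, and the only place where I expect to need care, is the bookkeeping in \cref{lem:about-m-and-n-it2}: verifying that the \emph{labelling} of the three arcs by $(i,j),(j,k),(k,i)$ matches the fixed intervals $(-1,0),(0,\infty),(-\infty,-1)$ regardless of whether $\gamma_\xi$ is orientation-preserving or reversing on $\mathbb{S}^1$. This is handled purely by the ``which endpoint is missing'' criterion and requires no orientation data.
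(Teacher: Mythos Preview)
Your proof is correct and follows essentially the same approach as the paper: bijectivity of the Möbius map $\gamma_\xi$ for \cref{lem:about-m-and-n-it1}, identifying $\gamma_\xi(\Theta)$ with the root set of $\Gamma(\xi)$ for \cref{lem:about-m-and-n-it2}, and the explicit preimage $(\xi_0,\xi_{n_1+1},\xi_{n_1+n_2+2})$ for \cref{lem:about-m-and-n-it3}. Your treatment of \cref{lem:about-m-and-n-it2} is in fact more careful than the paper's, as you spell out the arc-to-interval correspondence (and correctly note that the ``missing endpoint'' criterion makes it orientation-independent); one tiny slip is that the image of the open arc $\mathcal{S}^\xi_{k,i}$ is $(-\infty,-1)$, not $(-\infty,-1)\cup\{\infty\}$, since $\xi_k$ is an excluded endpoint---but you use the correct interval in the very next sentence.
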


\begin{proof}
  \textit{1.} For all \(\xi\in\ttt\), the map \(\gamma_{\xi}\) is a Möbius transformation, so it is bijective.

  \textit{2.} For each \(\xi\in (\ttt)\), the set \(\gamma_{\xi}(\Theta\mysetminus\{\xi_k\})\subseteq \reals\) is the set of roots of \(\Gamma(\xi)(x,1)\).
  Hence, given \(\xi,\xi'\in (\ttt)\), if \(\Gamma(\xi)=\Gamma(\xi')\), then \(\mathbf{m}(\xi)=\mathbf{m}(\xi')\).

  \textit{3.} Observe that for all \(\xi\in(\ttt)\), the sum of the integers \(\mathbf{n}(\xi)\) is \(d-3\) and, for all triplets \((a,b,c)\in (\nat_{d-3})^3\) with \(a+b+c=d-3\), trivially \((a,b,c)=\mathbf{n}(\xi_0,\xi_{a+1},\xi_{(a+1)+(b+1)})\).
  So,
  \[
    \card{\im{\mathbf{n}}}=\card{\{(a,b,c)\in (\nat_{d-3})^3 \mbox{ : }a+b+c=d-3 \}}=\binom{d-1}{2}.\hfill \qedhere
  \]
\end{proof}

\begin{proof}[Proof of \cref{thr:Gamma-dihedral-cover}]
  Given \(\mathscr{L},\mathscr{L}'\in\ppp\) there is \(\varphi\in\PGL_2\) such that \(\Gamma_{\mathscr{L}}=\sym^d(\varphi)\circ \Gamma_{\mathscr{L}'}\).
  Since \(\sym^d(\varphi)\) is an isomorphism, the partition \((\ttt)/\Gamma_{\mathscr{L}}\) does not depend on \(\mathscr{L}\). So we restrict to the case \(\mathscr{L}=(x+y,x,y)\), that is \(\Gamma_{\mathscr{L}}=\Gamma\).
  \medskip

  Given \(\sigma\in \mathbf{D}\), the action of \(\sigma\) on \(\Theta\) is the restriction to \(\Theta\) of a symmetry of \(\C\), which is a Möbius transformation.
  Then, for all pairwise distinct \(\xi_l,\xi_i,\xi_j,\xi_k\in \Theta\),
  \[
    (\xi_l,\xi_i;\xi_j,\xi_k)=
    (\sigma(\xi_l),\sigma(\xi_i);\sigma(\xi_j),\sigma(\xi_k)).
  \] Hence the map \(\Gamma\) is invariant by the action of \(\mathbf{D}\) and then the partition \((\ttt)/\mathbf{D}\) is finer than \((\ttt)/\Gamma\).
  Finally, by \cref{lem:about-m-and-n}\eqref{lem:about-m-and-n-it2}, the partition \((\ttt)/\Gamma\) is finer than \((\ttt)/\mathbf{m}\) and, by \cref{lem:about-m-and-n}(\ref{lem:about-m-and-n-it1},\ref{lem:about-m-and-n-it3}) and \cref{lem:cardinal-quotien-dihedral}, the cardinals of the partitions \((\ttt)/\mathbf{D}\) and \((\ttt)/\mathbf{m}\) are equal and finite, hence the partitions \((\ttt)/\mathbf{D}\) and \((\ttt)/\Gamma\) are equal.
\end{proof}

\begin{proposition}\label{prop:cardinal-of-Pi2}
  The cardinal of \(\Pi(\mathscr{L})\cap\overline{\Sigma}\) is \(\binom{d-1}{2}\).
\end{proposition}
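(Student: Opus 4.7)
My plan is to deduce \cref{prop:cardinal-of-Pi2} as an essentially immediate corollary of the machinery already developed. By \cref{prop:gamma-sujective}, the map \(\Gamma_{\mathscr{L}}\from(\ttt)\to \Pi(\mathscr{L})\cap \overline{\Sigma}\) is surjective, so the cardinality of the target equals the cardinality of the fibre partition \((\ttt)/\Gamma_{\mathscr{L}}\). By \cref{thr:Gamma-dihedral-cover}, \(\Gamma_{\mathscr{L}}\) is a \(\mathbf{D}\)-cover, and unwinding \cref{def:G-cover} this says exactly that \((\ttt)/\Gamma_{\mathscr{L}} = (\ttt)/\mathbf{D}\). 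Finally, \cref{lem:cardinal-quotien-dihedral} gives \(\card{(\ttt)/\mathbf{D}} = \binom{d-1}{2}\), and chaining these three identities finishes the proof. There is really no obstacle here: all the work was absorbed into \cref{thr:Gamma-dihedral-cover} (in particular the independence of \(\mathscr{L}\) and the agreement of the \(\Gamma\)- and \(\mathbf{D}\)-partitions).

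As a sanity check and in line with the alternative strategy announced in the introduction, I would also sketch a second proof via intersection theory. By \cref{rmk:linearity-of-pi}, \(\Pi(\mathscr{L})\subseteq \proj S_d\) is a linear subspace of codimension three; by \cref{rmk:2dn-sec-2-rank-forms}, \(\overline{\Sigma}\) is the (three-dimensional) second secant variety of the rational normal curve \(\mathcal{C}\subseteq\proj S_d\). It is classical that \(\deg \overline{\Sigma} = \binom{d-1}{2}\). If one proves that \(\Pi(\mathscr{L})\) meets \(\overline{\Sigma}\) transversely at every point of intersection, then Bézout's theorem produces exactly \(\binom{d-1}{2}\) intersection points.

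To verify the transversality, I would use Terracini's Lemma to describe the tangent space to \(\overline{\Sigma}\) at a smooth point \(F=L_1^d+L_2^d\) as \(\langle T_{L_1^d}\mathcal{C},\, T_{L_2^d}\mathcal{C}\rangle\), a \(\proj^3\) whose elements are forms of the shape \(L_1^{d-1}M_1 + L_2^{d-1}M_2\) for \(M_1,M_2\in S_1\). Then transversality of \(\Pi(\mathscr{L})\) with \(\overline{\Sigma}\) at such a point \(F\) amounts to checking that no nonzero form of that shape vanishes at the three roots of \(L_1L_2L_3\) to the required orders, which is a concrete linear-algebra computation: given that \(F\) itself already vanishes at those three roots and the \(L_i\) are pairwise distinct linear forms different from \(L_1, L_2\), the tangent direction sharing these vanishing conditions must be zero. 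Singular points of \(\overline{\Sigma}\) lie on \(\mathcal{C}\) (\cref{prop:3-kind-point-on-secant-var-of-RNC}), and \cref{rmk:properties-of-satrata-of-Sigma} together with \cref{rmk:linearity-of-pi} show they cannot lie on \(\Pi(\mathscr{L})\), so the intersection is entirely contained in the smooth locus.

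The main obstacle in the first approach is absent — it is a direct corollary. The hard part of the second approach is the transversality verification at each smooth intersection point; once that is done, the count drops out of Bézout's theorem and the known degree \(\binom{d-1}{2}\) of \(\overline{\Sigma}\).
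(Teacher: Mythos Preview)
Your proposal is correct and mirrors the paper's treatment almost exactly: the first argument is verbatim the paper's short proof (surjectivity of \(\Gamma_{\mathscr{L}}\), identification of \((\ttt)/\Gamma_{\mathscr{L}}\) with \((\ttt)/\mathbf{D}\) via \cref{thr:Gamma-dihedral-cover}, and the count from \cref{lem:cardinal-quotien-dihedral}), and your intersection-theoretic sketch is precisely the paper's second proof, including the use of Terracini's Lemma to parametrise \(T_P\overline{\Sigma}\) and the reduction of transversality to a linear-algebra check. One small remark: for the smoothness of \(\overline{\Sigma}\) away from \(\mathcal{C}\) you should cite \cref{lem:singularities-of-secant-var-of-RNC} rather than \cref{prop:3-kind-point-on-secant-var-of-RNC}, and note that transversality means the only element of \(T_P\overline{\Sigma}\) lying in \(\Pi(\mathscr{L})\) is \(P\) itself, not the zero form.
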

\begin{proof}
  By \cref{prop:gamma-sujective} the map \(\Gamma_{\mathscr{L}}\) is surjective, so the cardinals of \(\Pi(\mathscr{L})\cap\overline{\Sigma}\) and \((\ttt)/\Gamma_{\mathscr{L}}\) are equal.
  Then the claim follows from \cref{thr:Gamma-dihedral-cover} and \cref{lem:cardinal-quotien-dihedral}.
\end{proof}

\begin{proposition}\label{prop:set-of-maximal-gap-real-complex-rank}
  The set \(A\subseteq \proj \reals[x,y]_d\) of all forms with a maximal gap between their Waring and real Waring rank is equal to
  \[
    B=\bigcup_{\mathscr{L}\in(\proj \reals[x,y]_d)^3\mysetminus\Delta}\im{\Gamma_{\mathscr{L}}}
  \]
\end{proposition}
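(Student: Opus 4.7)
The plan is to first identify the set \(A\) explicitly, then verify the two inclusions \(A\subseteq B\) and \(B\subseteq A\) separately.

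\textbf{Identification of \(A\).} For any \(F\in\proj\reals[x,y]_d\) one has \(\rk(F)\le \rk_{\reals}(F)\le d\) by \cref{thr:maximal-possible-rank}. Moreover, \(\rk(F)=1\) forces \(F=cL^d\) with \(L\in\reals[x,y]_1\) (any \(d\)-th root of unity appearing in a complex decomposition can be absorbed into the scalar), so then \(\rk_{\reals}(F)=1\) as well. Thus the maximum possible value of \(\rk_{\reals}(F)-\rk(F)\) is \(d-2\), and it is attained iff \(\rk(F)=2\) and \(\rk_{\reals}(F)=d\). By \cref{thr:hyperbolic-max-real-rank} the latter condition is equivalent to \(F\) being hyperbolic, since rank two already excludes that \(F\) be a \(d\)-th power of a linear form. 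So \(A\) is precisely the set of hyperbolic rank-two forms in \(\proj\reals[x,y]_d\).

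\textbf{Inclusion \(A\subseteq B\).} Given \(F\in A\), by \cref{coro:sq-freeness-of-rk-2} together with hyperbolicity, \(F\) splits as a product of \(d\ge 4\) pairwise distinct real linear forms. Pick any three of them to form \(\mathscr{L}\in(\proj\reals[x,y]_1)^3\mysetminus\Delta\). Then \(F\in\Pi(\mathscr{L})\) and, by \cref{rmk:2dn-sec-2-rank-forms}, \(F\in\overline{\Sigma}\); hence \(F\in\Pi(\mathscr{L})\cap\overline{\Sigma}=\im{\Gamma_{\mathscr{L}}}\) by \cref{prop:gamma-sujective}, whence \(F\in B\).

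\textbf{Inclusion \(B\subseteq A\).} It suffices to show \(\im{\Gamma_{\mathscr{L}}}\subseteq A\) for every real \(\mathscr{L}\). Any element of the image has complex Waring rank two by construction from \(x^d-y^d\), so only hyperbolicity requires argument. For the basepoint \(\mathscr{L}=(x+y,x,y)\), the explicit formula derived earlier in this section writes \(\Gamma(\xi)\) as the product of \(x+y\), \(x\), \(y\) and the linear factors \(x+(\xi_l,\xi_i;\xi_j,\xi_k)y\) for \(l\in\nat_d\mysetminus\{i,j,k\}\); since the \(d\)-th roots of unity all lie on \(\mathbb{S}^1\), any cross ratio of four of them is real (four points in \(\proj^1(\C)\) are concyclic iff their cross ratio is real), so every factor of \(\Gamma(\xi)\) is a real linear form and \(\Gamma(\xi)\in A\). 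For general real \(\mathscr{L}=(L_1,L_2,L_3)\), factor \(\Gamma_{\mathscr{L}}=\sym^d(\psi)\circ\Gamma\), where \(\psi\in\PGL_2\) sends the projective roots of \(x+y,x,y\) to those of \(L_1,L_2,L_3\). Since the \(L_i\) are real, \(\psi\) sends real points to real points and hence \(\psi\in\PGL_2(\reals)\); then \(\sym^d(\psi)\) preserves real forms and hyperbolicity, yielding \(\Gamma_{\mathscr{L}}(\xi)\in A\).

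The main obstacle is identifying \(A\) concretely and then recognising that the cross ratios appearing in the formula for \(\Gamma(\xi)\) are real. Once the concyclicity observation is in place, everything else follows from results and constructions already established earlier in this section.
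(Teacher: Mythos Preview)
Your proof is correct and follows essentially the same route as the paper: identify \(A\) as the hyperbolic rank-two forms, use square-freeness plus hyperbolicity to pick three real linear factors for \(A\subseteq B\), and for \(B\subseteq A\) reduce to the basepoint \(\mathscr{L}=(x+y,x,y)\) via a real change of coordinates and argue that the remaining linear factors of \(\Gamma(\xi)\) are real. The only cosmetic difference is that the paper phrases the last step by invoking the map \(\gamma_{\xi}\colon\mathbb{S}^1\to\reals\cup\{\infty\}\) already introduced before \cref{lem:about-m-and-n}, whereas you invoke the equivalent classical fact that the cross ratio of four concyclic points is real.
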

\begin{proof}
  Notice that the real Waring rank of a form in \(\reals[x,y]_d\) is one if and only if its Waring rank is one as well.
  Hence, by \cref{thr:maximal-possible-rank}, the maximal gap between the Waring and the real Waring rank of a form in \(\proj \reals[x,y]_d\) is at most \(d-2\).
  Clearly, by \cref{thr:hyperbolic-max-real-rank}, this possible maximal gap is reached by the forms in \(\Pi\cap\overline{\Sigma}\), so
  \[
    A=\{F\in\proj\reals[x,y]_d \mbox{ : } \rk(F)=2,\ \rk_{\reals}(F)=d\}
  \] and then, by \cref{prop:rk-2-and-GL2-orbits,thr:hyperbolic-max-real-rank}, \(A\subseteq B\).

  Given \(F\in B\), by \cref{prop:rank2-forms-on-Pi2}, the Waring rank of \(F\) is two.
  Hence, by \cref{thr:hyperbolic-max-real-rank}, if all the roots of \(F\) are real, then \(F\in A\).
  But given \(\mathscr{L}\in (\proj \reals[x,y]_d)^3\mysetminus\Delta\), via a linear change of coordinates of \(\proj\reals[x,y]_1\), we may assume \(\mathscr{L}=(x+y,x,y)\) and then, for every \(\xi\in \Theta^3\mysetminus \Delta\), the roots of \(\Gamma_{\mathscr{L}}(\xi)\) are real because they belong to the image of \(\gamma_{\xi}\) (see discussion below \cref{thr:Gamma-dihedral-cover}).
\end{proof}

The following theorem summarises the results of this section.

\begin{theorem}\label{thr:family-of-ls-generaly-intersecting-Sigma}
  For all \(\mathscr{L}=(L_1,L_2,L_3) \in\ppp\), there are exactly \(\binom{d-1}{2}\) distinct forms in \(\proj S_d\) with Waring rank two and being multiple of \(L_1L_2L_3\).

  The union \(\Pi(\mathscr{L})\cap\overline{\Sigma}\) for all \(\mathscr{L}\in\ppp\) is equal to \(\Sigma\mysetminus \mathcal{C}\), the set of all forms \(F\in\proj S_d\) with Waring rank two.
\end{theorem}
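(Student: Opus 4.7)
The plan is to deduce this theorem directly from the results already established in this section, without introducing new machinery. I would handle the two assertions in turn, and expect both to be immediate consequences of prior work.

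For the first assertion, I would simply combine \cref{prop:cardinal-of-Pi2}, which gives $\card{\Pi(\mathscr{L})\cap\overline{\Sigma}}=\binom{d-1}{2}$, with \cref{prop:rank2-forms-on-Pi2}, which ensures that every element of this intersection has Waring rank exactly two. By the definition of $\Pi(\mathscr{L})$, each such form is automatically a multiple of $L_1L_2L_3$, so the three conditions (cardinality, Waring rank two, divisibility by $L_1L_2L_3$) are packaged in $\Pi(\mathscr{L})\cap\overline{\Sigma}$.

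For the second assertion, the inclusion $\bigcup_{\mathscr{L}\in \ppp}(\Pi(\mathscr{L})\cap\overline{\Sigma})\subseteq \Sigma\mysetminus\mathcal{C}$ follows from \cref{prop:rank2-forms-on-Pi2} together with \cref{rmk:2dn-sec-2-rank-forms}, which identifies $\Sigma\mysetminus\mathcal{C}$ with the set of forms of Waring rank two. For the reverse inclusion, I would pick an arbitrary $F\in\Sigma\mysetminus \mathcal{C}$, so that $\rk(F)=2$, and then invoke \cref{coro:sq-freeness-of-rk-2} to factor $F$ as a product of $d$ pairwise distinct linear forms; since $d\ge 4$, one can choose any three of these factors to form a triple $\mathscr{L}=(L_1,L_2,L_3)\in\ppp$ witnessing $F\in\Pi(\mathscr{L})\cap\overline{\Sigma}$.

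I do not anticipate a substantial obstacle: the theorem is essentially a recapitulation of \cref{prop:cardinal-of-Pi2,prop:rank2-forms-on-Pi2,coro:sq-freeness-of-rk-2}. The only mild point worth verifying is that the degree hypothesis $d\ge 4$ ensures that a Waring rank two form has at least three distinct linear factors, so that extracting a triple $\mathscr{L}$ from its factorization is always possible.
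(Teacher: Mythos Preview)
Your proposal is correct and follows essentially the same route as the paper: the first assertion is deduced from \cref{prop:cardinal-of-Pi2} and \cref{prop:rank2-forms-on-Pi2}, and the second from \cref{coro:sq-freeness-of-rk-2} together with the definition of \(\Pi(\mathscr{L})\). The only point the paper makes slightly more explicit is the trivial reverse inclusion in the first part, namely that any form of Waring rank two divisible by \(L_1L_2L_3\) already lies in \(\Pi(\mathscr{L})\cap\overline{\Sigma}\), which you implicitly use when you say the conditions are ``packaged'' there.
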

\begin{proof}
  Observe that every form \(F\in\proj S_d\) with Waring rank two and multiple of \(L_1L_2L_3\) belongs to \(\Pi(\mathscr{L})\cap\overline{\Sigma}\), so the first part follows from \cref{prop:cardinal-of-Pi2} and \cref{prop:rank2-forms-on-Pi2}.
  The second part follows from the definition of \(\Pi(\mathscr{L})\) and \cref{coro:sq-freeness-of-rk-2}.
\end{proof}

The main result of this section, \cref{thr:family-of-ls-generaly-intersecting-Sigma}, follows mainly from \cref{prop:cardinal-of-Pi2}.
To finish we offer another proof for the latter using classical results instead of the maps \(\Gamma_{\mathscr{L}}\).

First, we need \cref{lem:finite-number-of-rank2-forms}, which is already known, but we prove it without the maps \(\Gamma_{\mathscr{L}}\). 

\begin{lemma}\label{lem:finite-number-of-rank2-forms}
  Let \(\mathscr{L}=(L_1,L_2,L_3) \in \ppp\).
  The cardinal of \(\Pi(\mathscr{L})\cap\overline{\Sigma}\) is finite and it does not depend on the triplet \(\mathscr{L}\).
\end{lemma}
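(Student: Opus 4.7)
The plan is to apply classical intersection theory, exploiting the complementary dimensions of $\Pi(\mathscr{L})$ and $\overline{\Sigma}$. By \cref{rmk:linearity-of-pi}, $\Pi(\mathscr{L})$ is a linear subspace of $\proj S_d$ of codimension $3$; by \cref{rmk:2dn-sec-2-rank-forms}, the second secant variety $\overline{\Sigma}$ is an irreducible projective variety of dimension $3$. Since $(d-3)+3=d$, Bezout's theorem will imply both finiteness and independence---the cardinality will equal $\deg(\overline{\Sigma})$, a numerical invariant of the variety $\overline{\Sigma}$---provided we can show that $\Pi(\mathscr{L}) \cap \overline{\Sigma}$ is a transverse intersection at each of its points.

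The first step is to locate and smoothen the intersection points. By \cref{prop:rank2-forms-on-Pi2}, every $F \in \Pi(\mathscr{L}) \cap \overline{\Sigma}$ has Waring rank two. A tangent point of $\overline{\Sigma}$ has the form $N L^{d-1}$ with $N,L\in\proj S_1$ (see \cref{rmk:properties-of-satrata-of-Sigma}) and so has at most two distinct linear factors, hence cannot be divisible by the three distinct forms $L_1, L_2, L_3$. Therefore every $F \in \Pi(\mathscr{L}) \cap \overline{\Sigma}$ is a secant point, and in particular a smooth point of $\overline{\Sigma}$ (whose singular locus is $\mathcal{C}$). At such a point $F = a L^d + b M^d$ with $L, M \in \proj S_1$ non-proportional and $a,b\in\C\setminus\{0\}$, Terracini's Lemma identifies the projective tangent space as
\[
  T_F \overline{\Sigma} = \langle L^{d-1} S_1,\ M^{d-1} S_1 \rangle \subseteq \proj S_d,
\]
which is a $\proj^3$ containing~$F$ (the sum being direct since $L^{d-1}$ and $M^{d-1}$ are coprime).

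The main obstacle is then to verify that $T_F \overline{\Sigma} \cap \Pi(\mathscr{L}) = \{F\}$ for each such~$F$. A general element of $T_F \overline{\Sigma}$ is represented by $L^{d-1} P + M^{d-1} Q$ with $P, Q \in S_1$; imposing that it vanishes at the three roots $p_1, p_2, p_3 \in \proj^1$ of $L_1, L_2, L_3$ gives three linear conditions on the four-dimensional space of pairs $(P, Q)$. Using that $F$ itself vanishes at each $p_i$ forces the ratios $\zeta_i = L(p_i)/M(p_i)$ to be three pairwise distinct $d$-th roots of $-b/a$; a direct rank computation on the resulting $3 \times 4$ coefficient matrix (whose rows are essentially $(\zeta_i^{d-1} p_i,\ p_i)$) then shows that it has rank exactly $3$, the obstruction to a drop in rank being precisely $\zeta_i = \zeta_j$ for some $i\ne j$. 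The kernel is therefore one-dimensional, spanned by $(aL, bM)$, which corresponds to $F$ itself. With transversality established at every intersection point, Bezout's theorem yields both the finiteness of $\Pi(\mathscr{L}) \cap \overline{\Sigma}$ and the equality $\card{\Pi(\mathscr{L}) \cap \overline{\Sigma}} = \deg(\overline{\Sigma})$, which is manifestly independent of $\mathscr{L}$.
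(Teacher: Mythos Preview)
Your argument is correct, but it takes a substantially different route from the paper's proof of this particular lemma. The paper gives an elementary $\PGL_2$ argument: for finiteness, it observes that every $F\in\Pi(\mathscr{L})\cap\overline{\Sigma}$ lies in the orbit $\PGL_2\cdot[x^d-y^d]$, and there are only finitely many $\varphi\in\PGL_2$ sending three of the linear factors of $x^d-y^d$ to $L_1,L_2,L_3$; for independence, it uses that $\PGL_2$ acts transitively on ordered triples of distinct points of $\proj S_1$, so any two triples $\mathscr{L},\mathscr{L}'$ are related by some $\varphi$, and $\sym^d(\varphi)$ restricts to a rank-preserving bijection $\Pi(\mathscr{L})\to\Pi(\mathscr{L}')$.

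Your approach instead goes through Terracini's lemma and a transversality check to invoke B\'ezout. This is precisely the machinery the paper deploys \emph{later}, in its ``second proof'' of \cref{prop:cardinal-of-Pi2}, where the present lemma is used only as a preliminary reduction to a single convenient $\mathscr{L}$. In effect you have merged that second proof with this lemma, proving the stronger statement $\card{\Pi(\mathscr{L})\cap\overline{\Sigma}}=\deg\overline{\Sigma}$ directly. The trade-off is clear: the paper's proof of the lemma is lighter and self-contained (no smoothness of $\overline{\Sigma}$, no Terracini, no rank computation), while your argument, though heavier, delivers the exact count at once and makes the separate elementary lemma unnecessary.
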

\begin{proof}
  By \cref{prop:rk-2-and-GL2-orbits} and \cref{prop:rank2-forms-on-Pi2}, every form \(F\in \Pi(\mathscr{L})\cap \overline{\Sigma}\) belongs to the orbit \(\PGL_2\cdot [x^d-y^d]\), but there are just finitely many \(\varphi\in \PGL_2\) with \(\varphi\cdot [x^d-y^d]\in \Pi(\mathscr{L})\).

  Given another triplet \(\mathscr{L}'=(L_1',L_2',L_3')\), there is \(\varphi\in \PGL_2\) such that \(\varphi(L_i)=L_i'\) for all \(i=1,2,3\).
  Then, \(\sym^d(\varphi)|_{\Pi(\mathscr{L})}\from \Pi(\mathscr{L})\to \Pi(\mathscr{L}')\) is an isomorphism which preserves the Waring rank.
  So, it determines a one-to-one correspondence between \(\Pi(\mathscr{L})\cap\overline{\Sigma}\) and \(\Pi(\mathscr{L}')\cap\overline{\Sigma}\).
\end{proof}

\begin{lemma}\label{lem:singularities-of-secant-var-of-RNC}
  The variety \(\overline{\Sigma}\) is smooth away from \(\mathcal{C}\).
\end{lemma}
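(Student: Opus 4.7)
The plan is to realise \(\overline{\Sigma}\) as a catalecticant determinantal variety and apply the classical theorem on singular loci of such varieties. For \(F\in S_d\), let \(C_2(F)\) denote the \(3\times(d-1)\) matrix representing the apolar multiplication \(T_2\to S_{d-2}\), \(g\mapsto g\circ F\). By apolar duality \(\rank C_2(F)=\HF_{T/F^\perp}(2)\), and Sylvester's algorithm (\cref{thr:sylvesters-algorithm}), together with \cref{rmk:2dn-sec-2-rank-forms} and \cref{rmk:properties-of-satrata-of-Sigma}, shows that \(F\in\overline{\Sigma}\) if and only if \(F^\perp\) contains a nonzero quadric, equivalently \(\rank C_2(F)\le 2\). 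Since \(d\ge 4\), this realises \(\overline{\Sigma}\) as a proper determinantal subvariety of codimension \(d-3\) in \(\proj S_d\), cut out by the \(3\times 3\) minors of \(C_2\).

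Next I would identify the rank-\(\le 1\) stratum of \(C_2\) with \(\mathcal{C}\): the condition \(\rank C_2(F)\le 1\) forces every order-\((d-2)\) partial derivative of \(F\) to be a scalar multiple of a fixed element of \(S_2\), which for binary forms forces \(F=cL^d\) for some \(L\in S_1\); equivalently (by \cref{rmk:properties-of-satrata-of-Sigma}) \(F\in\mathcal{C}\). Concretely this uses that \(\HF(1)=2\) combined with \(\HF(2)\le 1\) is incompatible with the symmetric, unimodal Hilbert function of the Artinian Gorenstein quotient \(T/F^\perp\) unless \(F\) is a perfect \(d\)-th power.

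Finally, invoking the classical theorem that the singular locus of a rank-\(\le r\) determinantal variety is the rank-\(\le r-1\) locus (see e.g.\ \cite{Harris-AG-first-course,Eisenbud-Harris-3264}), one concludes that \(\overline{\Sigma}\) is smooth along \(\overline{\Sigma}\mysetminus\mathcal{C}\). The main delicacy is that the catalecticant entries obey the Hankel repetition, so the classical statement for generic matrices does not apply verbatim; one must verify that the pull-back of the tangent cone along the linear inclusion \(S_d\hookrightarrow M_{3\times(d-1)}\) remains transverse to the rank stratification at every rank-\(2\) point. I would dispatch this either by a direct Jacobian computation at two \(\PGL_2\)-orbit representatives (\([x^d-y^d]\) for the secant stratum and \([xy^{d-1}]\) for the tangent stratum, using the transitivity from \cref{prop:rk-2-and-GL2-orbits} and the description of tangent points in \cref{rmk:properties-of-satrata-of-Sigma}), or by appealing to the general results on singularities of secant varieties of smooth projective curves in \cite{Bertram-moduli-rk-2-vec-bundles,Coppens-sing-locus-sec-var-sm-pj-curve,Vermeire-sing-of-secant-var}, which give exactly this smoothness statement for \(\mathcal{C}\) a rational normal curve.
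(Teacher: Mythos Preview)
Your proposal is correct, but the paper's own proof is far shorter: it simply records the statement as a special case of known results on singular loci of secant varieties of smooth projective curves and cites \cite{Coppens-sing-locus-sec-var-sm-pj-curve}, \cite{Vermeire-sing-of-secant-var}, \cite{Vermeire-reg-of-powers-of-ideal-sheaves}, \cite{Graf-Hans-Hulek-geom-syzy-of-ellip-normal-curves-and-sec-var} and \cite{Bertram-moduli-rk-2-vec-bundles}. Your closing fallback (``appealing to the general results \ldots'') is exactly this.

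The catalecticant/Hankel route you outline first is a genuine and more self-contained alternative. Your identification of \(\overline{\Sigma}\) with the rank-\(\le 2\) locus of the \(3\times(d-1)\) catalecticant \(C_2\), and of \(\mathcal{C}\) with its rank-\(\le 1\) sublocus, is correct for \(d\ge 4\). You are also right to flag that the generic determinantal singular-locus theorem does not apply verbatim because the entries satisfy the Hankel relations; your remedy of checking the Jacobian at the two \(\PGL_2\)-orbit representatives \([x^d-y^d]\) (secant stratum) and \([xy^{d-1}]\) (tangent stratum) is valid, since \(\PGL_2\) indeed acts transitively on each of these strata. What your approach buys is explicitness and independence from the cited curve-theoretic machinery; what the paper's approach buys is brevity and a pointer to the general phenomenon for any smooth curve embedded by a sufficiently positive line bundle.
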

\begin{proof}
  It is a particular case of some more general well-known results.
  See \cite{Coppens-sing-locus-sec-var-sm-pj-curve} or, for a more general setting, \cite[Theorem 1.1 or Proposition 2.3]{Vermeire-sing-of-secant-var} and \cite[Lemma 3.1]{Vermeire-reg-of-powers-of-ideal-sheaves}.
  For another setting see \cite[\textsection 8.15-8.16]{Graf-Hans-Hulek-geom-syzy-of-ellip-normal-curves-and-sec-var}.
  Recall \cite{Bertram-moduli-rk-2-vec-bundles}, a seminal work of all of them.
\end{proof}

\begin{proof}[Second proof of \cref{prop:cardinal-of-Pi2}]
  By \cref{lem:finite-number-of-rank2-forms}, we assume \(\mathscr{L}=(x+y,x,y)\).

  Since a form \(F\in\Pi\) has at least three distinct roots, by \cref{rmk:properties-of-satrata-of-Sigma}, \(F\not\in T \mathcal{C}\) and \(\Pi\cap \overline{\Sigma}\subseteq \left( \overline{\Sigma}\mysetminus T \mathcal{C} \right)\).
  Now we show that \(\Pi\) and \(\overline{\Sigma}\) intersect transversely.
  Let \(P\in\Pi\cap \overline{\Sigma}\).
  Since \(P\in \left(\overline{\Sigma}\mysetminus T \mathcal{C}\right)\), first by \cref{lem:singularities-of-secant-var-of-RNC}, \(P\) is a smooth point of \(\overline{\Sigma}\) and, second by \cref{rmk:properties-of-satrata-of-Sigma}, there are \(l,t\in S_1\) with \([l]\not=[t]\in \proj S_1\) such that \(P=[l^d+t^d]\in\proj S_d\).
  By Terracini's Lemma (see \cite{terracini-1911-sulle-vk} for the original statement, \cite[Lemma 2.15]{carlini-grieve-oeding-4-lectures} for a modern formulation and \cite[Lemma 2.1]{fujita-roberts-1981-var-small-sec-var} for a modern proof of a simplified version), the following morphism parametrises the projective tangent space of \(\overline{\Sigma}\) at \(P\)
  \[
    \begin{array}{cccc}
      \tau\ : & \proj (S_1\times S_1) & \to & \proj S_d \\
              & [n:m] & \to & [nl^{d-1}+mt^{d-1}].\\
    \end{array}
  \] By \cref{rmk:linearity-of-pi}, the projective tangent space of \(\Pi\) at \(P\) is \(\Pi\) itself.
  So, the varieties \(\overline{\Sigma}\) and \(\Pi\) intersect transversely at \(P\) if the pull back by \(\tau\) of the linear equations defining \(\Pi\) are linearly independent.
  Recall from \cref{rmk:linearity-of-pi} that \(\Pi\) is just the set of all forms with roots at \([1:-1],[0:1],[1:0]\in \proj^1\) (the roots of \(x+y,x,y\)).
  So, clearly such pull back just corresponds to the homogeneous linear equations on the coefficients of \([n:m]\in\proj(S_1\times S_1 )\) determined by the following matrix
  \[
    \left(
      \begin{array}{cccc}
        l^{d-1}(1,0) & 0 & t^{d-1}(1,0) & 0 \\
        0 & l^{d-1}(0,1) & 0 & t^{d-1}(0,1) \\
        l^{d-1}(1,-1) & -l^{d-1}(1,-1) & t^{d-1}(1,-1) & -t^{d-1}(1,-1) \\
      \end{array}
    \right).
  \] Using that \([l]\not=[t]\in\proj S_1\) and \(P=[l^d+t^d]\in \Pi\) we can see that the column rank is three.

  Finally, since \(\dim~\overline{\Sigma} + \dim~\Pi=(3) + (d-3)=d\) (cf. \cite[Proposition 11.32]{Harris-AG-first-course} and \cref{rmk:linearity-of-pi}), by \cite[Theorem 18.3]{Harris-AG-first-course} the intersection of \(\overline{\Sigma}\) and \(\Pi\) is a finite set with \(\deg~\overline{\Sigma}=\binom{d-1}{2}\) (cf. \cite[Theorem 10.16]{Eisenbud-Harris-3264}) points.
\end{proof}

\section{To the future}
\label{ssec:possible-generalisations}

\subsection{To higher rank}
\label{sssec:to-higher-rank}

\cref{prop:3-distinct-roots} below shows that the family of linear subspaces \(\Pi(\mathcal{L})\) contains all forms whose Waring rank is lower than the generic rank for degree \(d\) forms.

\begin{proposition}\label{prop:3-distinct-roots}
  Let \(F\in \proj S_d\mysetminus \mathcal{C}\) and denote \(r=\rk(F)\).
  If \(r\le\lfloor \frac{d+1}{2}\rfloor\), then \(F\) has at least three distinct roots.
\end{proposition}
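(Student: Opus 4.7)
The plan is to proceed by contradiction. Assume \(F\in \proj S_d\mysetminus \mathcal{C}\) with \(\rk(F)=r\le \lfloor (d+1)/2\rfloor\) has at most two distinct roots. Since \(\C\) is algebraically closed, this forces \(F\) to have one of the following shapes:
\begin{itemize}
  \item \(F=[L^d]\) for some \(L\in S_1\), in which case \(F\in \mathcal{C}\), contradicting the hypothesis \(F\notin \mathcal{C}\);
  \item \(F=[L_1^aL_2^b]\) for two distinct linear forms \(L_1,L_2\in \proj S_1\) and integers \(a,b\ge 1\) with \(a+b=d\).
\end{itemize}
So I only need to rule out the second case. Without loss of generality, assume \(a\le b\); then \(b\ge \lceil d/2\rceil\).

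Since the Waring rank is invariant under a linear change of coordinates, I may assume \(F=[x^ay^b]\). The next step is to apply Sylvester's algorithm (\cref{thr:sylvesters-algorithm}) to this monomial. A direct check of the apolar action gives \(X^{a+1}\circ x^ay^b=0\) and \(Y^{b+1}\circ x^ay^b=0\), and a standard inverse-systems argument (or \cite[Proposition 3.1]{CCG12}) yields
\[
  F^{\perp}=(X^{a+1},\, Y^{b+1}),
\]
so the two generators have degrees \(a+1\le b+1\) summing to \(d+2\), as required by the Structure Theorem. Because \(a\ge 1\), the smaller generator \(X^{a+1}\) is not square-free, hence Sylvester's algorithm gives \(\rk(F)=b+1\).

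Finally I combine the bound \(b\ge \lceil d/2\rceil\) with the identity \(\lceil d/2\rceil=\lfloor (d+1)/2\rfloor\) (immediate by splitting into the cases \(d\) even and \(d\) odd) to obtain
\[
  r=\rk(F)=b+1\ge \lceil d/2\rceil +1=\lfloor (d+1)/2\rfloor +1>\lfloor (d+1)/2\rfloor,
\]
contradicting the assumption on \(r\). This closes the contradiction and proves the proposition. No step presents a genuine obstacle: the only mild subtlety is to remember that the hypothesis \(F\notin \mathcal{C}\) is precisely what rules out the one-root case, while the monomial rank formula handles the two-root case mechanically.
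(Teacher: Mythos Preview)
Your proof is correct and follows essentially the same approach as the paper: rule out the single-root case using \(F\notin\mathcal{C}\), reduce the two-root case to the monomial \(x^ay^b\) via a linear change of coordinates, and invoke the monomial rank formula \(\rk(x^ay^b)=\max\{a,b\}+1\) to obtain the contradiction. The only difference is cosmetic: the paper cites the monomial rank directly, while you rederive it from \(F^{\perp}=(X^{a+1},Y^{b+1})\) and Sylvester's algorithm.
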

\begin{proof}
  The form \(F\) has not a single root since \(F\not\in \mathcal{C}\).
  If \(F\) has just two distinct roots, via a linear change of coordinates we may assume \(F=x^ay^b\) with \(a+b=d\) and then \(\rk(F)=\max\{a,b\}+1> r\).
\end{proof}

So, it could be interesting intersect higher secant varieties of \(\mathcal{C}\) with the linear subspaces \(\Pi(\mathcal{L})\subseteq \proj S_d\).
One key point in order to describe such intersection for the rank two case is \cref{prop:3-kind-point-on-secant-var-of-RNC}, which, by \cite{comas-seiguer-rank-binary}, looks generalizable to
\[
  \sigma_r(\mathcal{C})\mysetminus \sigma_{r-1}(\mathcal{C})= (\mathcal{O}_{r-1}\mysetminus \mathcal{O}_{r-2})\sqcup (\sigma_r(\mathcal{C})\mysetminus \mathcal{O}_{r-1})
\] where \(\mathcal{O}_r\) is the \(r\)-th osculating variety of \(\mathcal{C}\) (i.e. \(\mathcal{O}_r=\overline{\cup_{P\in \mathcal{C}} \left\langle rP \right\rangle}\)).

\subsection{To higher dimensions}
\label{sssec:to-higher-dimensions}

When we consider \(S=\C[x_0,\dots,x_n]\), the dimension of \(\mathcal{C}\subseteq \proj S_d\) is \(n\) and the dimension of \(\Sigma\subseteq \proj S_d\) is \(2n+1\) (assuming \(d>4\), for \(d=4\) the cases \(n=3,4,5\) are respectively \(5,9,14\)-defectives).
Observe that \cref{prop:3-kind-point-on-secant-var-of-RNC} and \cref{rmk:properties-of-satrata-of-Sigma} are satisfied in any dimension.
That is, the forms belonging to \(\Sigma\) split as a product of linear forms.
So, denoting \(\proj L_d\) the natural image of \(S_1^{(d)}\) (the \(n\)-th symmetric power) into \(\proj S_d\), \(\Sigma\subseteq \proj L_d\).
Now, \(\dim \proj L_d=nd\) and the subset of forms in \(\proj L_d\) multiples of two given linear forms is a subvariety of codimension \(2n\).
Hence, we could try to intersect \(\Sigma\subseteq \proj L_d\) with subspaces of forms with a fixed root \(\alpha\in \proj^n\) and multiples of two linear forms \(l,t\in \proj S_1\) where \(\alpha\) is a root of neither \(l\) nor \(t\). Such varieties have the desired codimension in \(\proj L_d\).

\bibliographystyle{spmpsci}
\bibliography{general}

\end{document}